\documentclass{acm_proc_article-sp}
\usepackage{amsmath,amsfonts}
\usepackage{amssymb}

\usepackage{epsfig}

\usepackage{graphics}
\usepackage{graphicx}

\usepackage{latexsym}
\usepackage{graphics}
\usepackage{amsmath}
\usepackage{xspace}
\usepackage{amssymb}
\usepackage{color}
\usepackage{cite}
\usepackage{latexsym}
\usepackage{graphics}
\usepackage{amsmath}
\usepackage{xspace}
\usepackage{amssymb}
\usepackage{epsfig}
\usepackage{cite}
\usepackage{url}
\usepackage{bm}
\usepackage{graphicx}
\usepackage{algorithm}
\usepackage[noend]{algorithmic}
\usepackage{subfig}
\usepackage{amssymb, amsmath,graphicx,charter, latexsym}
\usepackage{enumerate}
\usepackage{comment}
\usepackage{caption}
\usepackage{subfig}
\usepackage{tikz}
\usetikzlibrary{arrows}
\usepackage{adjustbox}

\DeclareMathOperator*{\argmax}{arg\,max}



\usepackage{soul}

\newtheorem{theorem}{Theorem}

\newtheorem{lemma}[theorem]{Lemma}

\newtheorem{proposition}[theorem]{Proposition}
\newtheorem{definition}[theorem]{Definition}
%

\allowdisplaybreaks[4]
\usepackage{bbm}



\newcommand{\beql}[1]{\begin{equation}\label{#1}}
\newcommand{\eeql}{\end{equation}}
\newcommand{\eqn}[1]{(\ref{#1})}

\newcommand{\R}{\mathbb{R}}     


\newcommand{\cb}{{\cal B}}

\newcommand{\bx}{\boldsymbol{x}}
\newcommand{\by}{\boldsymbol{y}}
\newcommand{\bz}{\boldsymbol{z}}

\newcommand{\blambda}{\boldsymbol{\lambda}}
\newcommand{\bmu}{\boldsymbol{\mu}}
\newcommand{\bgamma}{\boldsymbol{\gamma}}
\newcommand{\bnu}{\boldsymbol{\nu}}

\newcommand{\bzero}{\boldsymbol{0}}

\newcommand{\bA}{\boldsymbol{A}}
\newcommand{\bS}{\boldsymbol{S}}

\newcommand{\bV}{\boldsymbol{V}}
\newcommand{\bW}{\boldsymbol{W}}
\newcommand{\bQ}{\boldsymbol{Q}}
\newcommand{\bY}{\boldsymbol{Y}}
\newcommand{\bG}{\boldsymbol{G}}




\newcommand{\sL}{{\cal L}}

\newcommand{\eeq}{\end{equation}}
\newcommand{\beqal}[1]{\begin{eqnarray}\label{#1}}
\newcommand{\eeqa}{\end{eqnarray}}
\newcommand{\eeqno}{\end{displaymath}}

\newcommand{\bb}{\boldsymbol{b}}
\newcommand{\ba}{\boldsymbol{a}}


%
\begin{document}
\title{MaxWeight Scheduling: Asymptotic Behavior of Unscaled Queue-Differentials in Heavy Traffic
}            

\numberofauthors{2}

\author{
\alignauthor Rahul Singh\\
       \affaddr{ECE Department}\\
       \affaddr{Texas A\&M University}\\
       \affaddr{College Station, TX 77843}\\
       \email{rsingh1@tamu.edu}
\alignauthor Alexander L. Stolyar\\
      \affaddr{ISE Department}\\
       \affaddr{Lehigh University}\\
       \affaddr{Bethlehem, PA 18015}
       \email{stolyar@lehigh.edu}
                     }

\maketitle

\begin{abstract}
The model is a ``generalized switch", serving multiple traffic flows in discrete time. The switch uses MaxWeight algorithm to make a service decision (scheduling choice) at each time step, which determines the probability distribution of the amount of service that will be provided. We are primarily motivated by the following question: in the heavy traffic regime, when the switch load approaches critical level, will the service processes provided to each flow remain ``smooth"  (i.e., without large gaps in service)? Addressing this question reduces to the analysis of the asymptotic behavior of the unscaled queue-differential process in heavy traffic. We prove that the stationary regime of this process converges to that of a positive recurrent Markov chain, whose structure we explicitly describe. This in turn implies asymptotic ``smoothness" of the service processes.
\end{abstract}

\keywords{Dynamic scheduling, MaxWeight algorithm, Heavy traffic\\ asymptotic regime, Markov chain, Queue length differentials, Smooth service process} 

\section{Introduction}

Suppose we have a system in which several data traffic flows share a common transmission medium (or channel).
Sharing means that in each time slot a scheduler chooses a transmission mode -- the subset the flows to serve and corresponding 
transmission rates; the outcome of each transmission (the number of successfully delivered packets) is random.
Scheduler has two key objectives: (a) the time-average (successful) transmission rate of each flow $i$ has to be at least some $\lambda_i>0$;
(b) the successful transmissions for each flow need to be spread out "smoothly" in time -- without large time-gaps between succesful transmissions. Such models arise, for example, when the goal is {\em timely delivery} of information over a shared wireless
channel \cite{hou}. 

A very natural way to approach this problem is to treat the model as a queueing system, where services (transmissions) are 
controlled by a so called MaxWeight scheduler (see \cite{tass,c1, c2}), which serves a set of {\em virtual queues} (one for each traffic flow), each receiving
new work at the rate $\lambda_i$. (See e.g. \cite{akrsvw_commag}.) This automatically achieves objective (a), if this is feasible at all; MaxWeight 
is known to be {\em throughput optimal} -- stabilize the queues if this is feasible at all. The MaxWeight stability results, however,
do not tell whether or not the objective (b) is achieved. Specifically, when the system is heavily loaded, i.e. the vector 
$\blambda=(\lambda_i)$ is within the system {\em rate region} $\bV$, but close to its boundary, the steady-state queue lengths under 
MaxWeight are necessarily large, and it is conceivable that this may result in large time-gaps in service for individual flows.
(Note that, if (a) and (b) are the objectives and the queues are virtual, the large queue lengths in themselves are not an issue.
As long as (a) and (b) are achieved, minimizing the queue lengths is not important.)
Our main results show that this is {\em not} the case. Namely, in the heavy traffic regime, when $\blambda \to \blambda^\star$,
where  $\blambda^\star$ is a point on the outer boundary of rate region $\bV$, the service process remains "smooth", in the sense
that its stationary regime converges to that of a positive recurrent Markov chain, whose structure is given explicitly.

To obtain "clean" convergence results, we assume that the amount of new work arriving in the queues in each time slot is random and has {\em continuous} distribution. (The amounts of service are random, but discrete.)
Under this assumption, the state spaces of the processes that we consider are continuous. On one hand, this makes 
the analysis more involved (because the notion of positive recurrence is more involved for a continuous state space,
as opposed to a countable one). But on the other hand, this makes
all stationary distributions absolutely continuous w.r.t. the corresponding Lebesgue measure, making it easier to prove convergence.
We emphasize that the assumption of continuous distribution of the arriving work is non-restrictive; if we create 
virtual queues, artificially, for the purpose of applying MaxWeight algorithm, the structure of the virtual arrival
process is within our control.
 
The problem essentially reduces to analysis of stationary versions of the {\em queue-differential} process $\bY$, which is
the projection of the (weighted) queue length process on the subspace $\bnu_\perp$, orthogonal to the outer {\em normal cone} $\bnu$ to
the rate region $\bV$ at the point $\blambda^\star$. As we show, in the heavy-traffic limit, in steady-state,
the values of the queue-differential process $\bY$
uniquely determine the decisions chosen by MaxWeight scheduler. 
Note that the process $\bY$ is obtained by projection only, without any scaling depending on the system load.

The model that we consider is essentially a "generalized switch" of \cite{c1}. Some features of our model,
namely random service outcome and continuous amounts of arriving work, as well as the objective (b), 
are motivated by applications such as timely delivery 
of packets of multiple flows over a shared wireless channel \cite{hou}. The model of \cite{hou} is a special case of ours;
paper \cite{hou} introduces a {\em debt scheme} and proves that it achieves the throughput objective (a);
the objective (b) is not considered in \cite{hou}. 

The analysis of MaxWeight stability has a long history, starting from the seminal paper \cite{tass}, which introduced\\ MaxWeight;
heavy traffic analysis of the algorithm originated in \cite{c1}. (See, e.g., \cite{c2} for an extensive recent review of
\\ MaxWeight literature.) 

The line of work most closely related to this paper, is that in \cite{c2, atilla1, atilla2}. 
Paper \cite{c2} studies MaxWeight under heavy traffic regime and under the additional assumption that 
the normal cone $\bnu$ is one-dimensional, i.e. it is a ray. (The latter assumption is usually referred to in the literature
as {\em complete resource pooling} (CRP).)
Paper \cite{c2} shows, in particular, the stationary distribution tightness of what we call the queue-differential process $\bY$ in heavy traffic. Part of our analysis is also showing the stationary distribution tightness of $\bY$ -- it is analogous to that in \cite{c2} (and we also 
borrow a lot of notation from \cite{c2}). Besides the difference in models, our proof of tightness
is more general in that it applies to non-CRP case -- this more general argument is close to that used in \cite{aks2007stoc}.
From the tightness of stationary distributions, using the structure of the corresponding continuous state space,
we obtain the convergence of the stationary version of (non-Markov) process $\bY$ to that of a positive recurrent Markov chain,
whose structure we explicitly describe.

Papers \cite{atilla1, atilla2} consider objective (b) in the heavy traffic\\ regime. They introduce a modification of MaxWeight, 
called {\em regular service guarantee} (RSG) scheme, 
which 
explicitly tracks the service time-gaps for each flow to dynamically increase the scheduling priority of flows with large current time-gaps.
The papers prove that RSG, under certain parameter settings, preserves heavy-traffic queue-length minimization properties 
of MaxWeight, under CRP condition; at the same time, the papers demonstrate via simulations 
that RSG improves smoothness (regularity) of the service process. Recall that in this paper we focus on the "pure" MaxWeight,
without CRP, and formally show the service process smoothness in the heavy traffic limit.

The rest of the paper is organized as follows. The formal model is presented in Section~\ref{systemmodel}.
Section~\ref{relevantbackground} describes the MaxWeight algorithm and the heavy traffic asymptotic regime.
Our main results, Theorems~\ref{th-smooth} and \ref{theorem2} are described in Section~\ref{problemmotivation}.
(Formal statement of Theorem~\ref{theorem2} is in Section~\ref{sec-Ystar}.)
The CRP condition is defined in Section~\ref{sec-crp}.
In Section~\ref{MTresults} we provide some necessary background and results for general state-space Markov chains.
Sections~\ref{sec-queue-length} -- \ref{sec-Ystar} we prove our results for the special case when CRP holds.
Finally, in Section~\ref{sec-no-crp} we show how the proofs generalize to the case when CRP does {\em not} necessarily hold.

\subsection{Basic notation}

Elements of a Euclidean space $\mathbb{R}^{N}$ will be viewed as row-vectors, and written in bold font; $\|\boldsymbol{a}\|$ is the usual Euclidean norm of vector $\boldsymbol{a}$.
For two vectors $\boldsymbol{a}$ and $\boldsymbol{b}$, $\boldsymbol{a}\cdot \boldsymbol{b}$ denotes their scalar (dot) product; vector inequalities are understood componentwise;
zero vector and the vector of all ones are denoted 
$\boldsymbol{0}$ and $\boldsymbol{1}$,  respectively; $\boldsymbol{ab}$ will denote the vector obtained by componentwise multiplication;
if all components of  $\boldsymbol{b}$ are non-zero, $\boldsymbol{\frac{a}{b}}$ will denote the vector obtained by componentwise division;
statement ``$\boldsymbol{a}$ is a positive vector" means $\boldsymbol{a}>\boldsymbol{0}$. 
The closed ball of radius $r$ centered at $\boldsymbol{x}$ is $B_{r}(\boldsymbol{x})$. The positive orthant of $\mathbb{R}^{N}$ is denoted
$\mathbb{R}^{N}_{+} = \{\boldsymbol{x}\in\mathbb{R}^{N}:\boldsymbol{x}\geq \boldsymbol{0}\}$. 

For numbers $a$ and $b$, we denote $a \vee b = \max(a,b)$, $a \wedge b = \min(a,b)$, $a^+=a \vee 0$. For vectors $\ba \le \bb$, we denote by $[\ba,\bb]$ the
rectangle $\times_{i=1}^N [a_i, b_i]$ in $\mathbb{R}^{N}$.

We always consider Borel $\sigma$-algebra $\cb(\mathbb{R}^{N})$ (resp. $\cb(\mathbb{R}^{N}_+)$ )
 on $\mathbb{R}^{N}$ (resp. $\cb(\R^N_+)$), when the latter is viewed as measurable space. 
 Lebesgue measure on $\mathbb{R}^{N}$ is denoted by 
$\sL$. 
When we consider a linear subspace of $\mathbb{R}^{N}$, we endow it with
the Euclidean metric
and the corresponding Borel $\sigma$-algebra and Lebesgue measure.

For a random process $\bW(t), ~t=0,1,2, \ldots$, we often use notation $\bW(\cdot)$ or simply $\bW$.

\section{System Model}
\label{systemmodel}

We consider a system of $N$ flows served by a ``switch", which evolves in discrete time $t=0,1,\ldots$. At the beginning of each time-slot, the scheduler has to choose from a finite number $K$ of ``service-decisions". If the service decision $k\in\left\{1,\ldots,K\right\}$ is chosen, then  independently of the past history 
the flows get an amount of service, given by a random non-negative vector.  
Furthermore, we assume that (if decision $k$ is chosen), there is a finite number $\mathcal{O}_{k}$ of possible service-vector outcomes, i.e. with probability $p^{k,j}, j = 1,\ldots,\mathcal{O}_{k}$, it is given by a non-negative vector $\boldsymbol{v}^{k,j} = (v^{k,j}_1, \ldots, v^{k,j}_N)$. 
The expected service vector for decision $k$ is denoted
$\boldsymbol{\mu}^{k} = (\mu^{k}_1, \ldots, \mu^{k}_N) = \sum_{j=1}^{\mathcal{O}_{k}} \boldsymbol{v}^{k,j}p^{k,j}$.
We assume that vectors $\boldsymbol{\mu}^{k}$ are non-zero and different from each other; and that for each $i$ there exists $k$ such that $\mu^{k}_i >0$.
We will use notations
$$
S_i^{max} = \max v^{k,j}_i ~~\mbox{over all $k$ and $j$}; ~~~\boldsymbol{S}^{max} = (S_1^{max}, \ldots, S_N^{max}).
$$
We denote by $\boldsymbol{S}(t)=(S_1(t),\ldots,S_N(t))$ the (random) realization of the service vector at time $t$, and call $\boldsymbol{S}(\cdot)$ the service process.

After the service at time $t$ is completed, a random  amount of work arrives into the queues, and it is given by a 
non-negative vector $\boldsymbol{A}(t)=(A_1(t),\ldots,A_N(t))$. The values of $\boldsymbol{A}(t)$ are i.i.d. across times $t$, and 
$\boldsymbol{A}(\cdot)$ is called the arrival process. The mean arrival rates of this process are given by vector 
$$
\boldsymbol{\lambda} = (\lambda_1, \ldots, \lambda_N) = E \boldsymbol{A}(t).
$$
We will now make assumptions on the distribution of $\boldsymbol{A}(t)$.
The distribution is absolutely continuous w.r.t. Lebesgue measure, it is concentrated on the rectangle
$
[\bzero,\boldsymbol{A}^{\max}]
$
for some constant vector $\boldsymbol{A}^{\max} > \boldsymbol{S}^{\max}$; moreover,
on this rectangle the distribution density $f(\bx)$ is both upper 
and lower bounded by positive constants, i.e. $0 < \delta_* \le f(\bx) \le \delta^*$.

If $\boldsymbol{Q}(t)\triangleq \left(Q_{1}(t),\ldots,Q_{N}(t)\right)$ is the vector of queue lengths at time $t$, then for each 
$ i=1,\ldots,N$
 \begin{align}\label{eq:1}
Q_{i}(t+1) &= \left(Q_{i}(t) -S_{i}(t)\right)^{+}+ A_{i}(t), \notag\\
& = Q_{i}(t) + A_{i}(t) -S_{i}(t) + U_{i}(t),
\end{align}
where $U_{i}(t)=(S_i(t)-Q_i(t))^+$ is the amount of service ``wasted" by flow $i$ at time $t$.

\section{MaxWeight scheduling scheme. Heavy traffic regime}\label{relevantbackground}

\subsection{MaxWeight definition}

Let a vector $\boldsymbol{\gamma} = \left(\gamma_{1},\ldots,\gamma_{N}\right)>\boldsymbol{0}$ be fixed. 
MaxWeight scheduling algorithm chooses, at each time $t$,  a service decision 
\begin{equation}
\label{mwdef}
k \in \argmax_{l} \left(\left(\boldsymbol{\gamma Q}(t)\right)\cdot \boldsymbol{\mu}^{l}  \right);
\end{equation}
with ties broken according to any well defined rule. 

Under MaxWeight, the queue length process $\bQ(\cdot)$
is a discrete time Markov chain with (continuous) state space $\mathbb{R}_{+}^{N}$. 
System stability 
is understood as positive Harris recurrence
of this Markov chain. 

Denote the system {\em rate region} by
\begin{align}
\boldsymbol{V} \triangleq \left\{\boldsymbol{x}\in\mathbb{R}^{N}_{+}:\boldsymbol{x}\leq \sum\limits_{k}\psi_{k}\boldsymbol{\mu}^{k} \mbox{ for some } \psi_{k}\geq 0,\sum\limits_{k}\psi_{k}=1	\right\}
\end{align}
It is well known (see \cite{tass,c1,c2}) that, in general, under\\ MaxWeight the system is stable as long as the vector of mean arrival rates $\boldsymbol{\lambda}$ is such that $\boldsymbol{\lambda}<\boldsymbol{x}\in\boldsymbol{V}$.
(Scheduling rules having this property are sometimes called ``throughput-optimal".)
This is true for our model as well as will be shown in Section~\ref{sec-queue-length}. 
(Establishing this fact is not difficult, but it does not directly follow from previous work,
because we have continuous state space.)


\subsection{Heavy traffic regime}\label{ht}

We will consider a sequence of systems, indexed by $n\to \infty$, operating under MaxWeight scheduling. 
(Variables pertaining to $n$-th system will be supplied superscript $(n)$.)
The switch parameters will remain 
unchanged, but the distribution of $\bA^{(n)}(t)$ changes with $n$: 
namely, for each $n$ it has density $f^{(n)}$ which satisfies all conditions specified in Section~\ref{systemmodel},
and $f^{(n)}$ uniformly converges
to some density $f^*$. Note that, automatically, the limiting density  $f^{\star}$ (as well as each $f^{(n)}$)
satisfies bounds $0 < \delta_* \le f^{\star}(\bx) \le \delta^*$ in the rectangle $[\bzero,\bA^{max}]$,
and is zero elsewhere. The arrival process  $\bA^{\star}(\cdot)$, such that the distribution of $\bA^{\star}(t)$ has density $f^{\star}$, 
has the arrival rate vector $\blambda^{\star}$.
Correspondingly, $\blambda^{(n)} \to \blambda^{\star}$. 

We assume that $\blambda^{\star}>\bzero$ is a maximal element of rate region $\bV$, i.e. 
$\bx \ge \blambda^{\star}$ and $\bx \in \bV$ only when $\bx = \blambda^{\star}$.
Thus,  $\blambda^{\star}$ lies on the outer boundary of  $\bV$.
We further assume that for each $n$,
$\boldsymbol{\lambda}^{(n)}$ lies in the {\em interior} of $\bV$;
therefore, the system is stable for each $n$ (under the MaxWeight algorithm).

The (limiting) system, with arrival process $\bA^{\star}(\cdot)$ is called
critically loaded. 


\section{Main Results}
\label{problemmotivation}

Consider the sequence of systems described in Section~\ref{relevantbackground}, in the heavy traffic regime.
Under any throughput-optimal scheduling algorithm, for each $n$, the steady-state average amount of service provided to each flow $i$
is greater or equal to its arrival rate $\lambda_i$. (It may, and typically will, be greater if the wasted service is taken into account.)

We now define the notion of {\em asymptotic smoothness} of the steady-state service process.
Informally, it means the property that as the system load approaches critical, the steady state service processes are 
such that for each flow the probability of a $T$-long gap (without any service at all) uniformly vanishes, as $T\to\infty$.

For each $n$, consider the cumulative service process $\boldsymbol{G}^{(n)}(\cdot)$ in steady state. Namely,
\begin{align*}
\boldsymbol{G}^{(n)}(t) \triangleq \sum_{\tau=1}^{t} \boldsymbol{S}^{(n)}(\tau), ~~~t=1,2,\ldots
\end{align*}

\begin{definition}
We call the service process asymptotically \\smooth, if
\beql{eq888}
\max_i \lim_{T\to\infty}\left(\limsup_{n\to\infty} P\left(G^{(n)}_{i}(T)=0\right)\right)=0.
\end{equation}
\end{definition}

Our key result (Theorem~\ref{theorem2} in Section~\ref{sec-Ystar}) 
shows that a "queue-differential" process, which determines scheduling decisions in 
the system under MaxWeight in heavy traffic, is such that its stationary version converges to that of stationary positive Harris 
recurrent Markov chain, whose structure we describe explicitly. This result, in particular, will imply the following

\begin{theorem}
\label{th-smooth}
Consider the sequence of systems described in Section~\ref{relevantbackground}, in the heavy traffic regime.
Under MaxWeight scheduling, 
the service process is asymptotically smooth.
\end{theorem}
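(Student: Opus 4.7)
The approach is to derive Theorem~\ref{th-smooth} as a consequence of Theorem~\ref{theorem2}, by reducing the gap probability to a quantity controlled by the limit Markov chain $\bY^{\star}$ and by its positive long-run service rates. The key observation is that $\{G_i^{(n)}(T)=0\}$ requires flow $i$ to receive zero actual service in every one of $T$ consecutive slots, whereas in the critically loaded limit the arrival rate $\lambda_i^{\star}>0$ forces, in steady state, flow $i$ to be served at a strictly positive long-run rate; these two facts will be combined via a conditional-independence bound and an ergodic argument on $\bY^{\star}$.

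First I would establish the bound. Writing $D(t)$ for the decision chosen at time $t$ and setting $K_i=\{k:\mu_i^k>0\}$, one has $P(S_i(t)>0\mid D(t)=k)\ge q$ for every $k\in K_i$, where $q>0$ depends only on the switch primitives, while $S_i(t)\equiv 0$ whenever $D(t)\notin K_i$. Letting $N_T^{(n),i}=\#\{\tau\in[1,T]:D^{(n)}(\tau)\in K_i\}$, conditioning on the decision sequence yields
\begin{equation*}
P\bigl(G_i^{(n)}(T)=0\bigr)\le E\bigl[(1-q)^{N_T^{(n),i}}\bigr].
\end{equation*}
Next, Theorem~\ref{theorem2} provides the stationary convergence of $\bY^{(n)}$ to the positive Harris recurrent Markov chain $\bY^{\star}$, whose structure is explicitly known. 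In the limit the MaxWeight decision is a deterministic function of $\bY$ (the $\argmax$ in~\eqref{mwdef} is unique outside a Lebesgue-null set and stationary laws are absolutely continuous), and the one-step update of $\bY$ uses only this decision plus independent arrival and service randomness. A coupling argument therefore promotes stationary convergence to joint convergence of any $T$-long block of decisions, giving $\limsup_n E[(1-q)^{N_T^{(n),i}}]\le E[(1-q)^{N_T^{\star,i}}]$. Finally, rate conservation in the stable pre-limit system gives $E[S_i^{(n)}(t)]\ge\lambda_i^{(n)}$, and the wasted service vanishes in heavy traffic, so $E[S_i^{\star}(t)]\ge\lambda_i^{\star}>0$ and in particular $P(D^{\star}(t)\in K_i)>0$ under the stationary law. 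By the ergodic theorem for $\bY^{\star}$, $N_T^{\star,i}/T$ converges a.s.\ to a positive constant, hence $N_T^{\star,i}\to\infty$ a.s., and bounded convergence yields $E[(1-q)^{N_T^{\star,i}}]\to 0$ as $T\to\infty$, proving~\eqref{eq888}.

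The hardest point is the middle step: lifting the one-dimensional stationary convergence of $\bY^{(n)}$ to joint convergence of a $T$-long block of MaxWeight decisions. The $\argmax$ decision rule is only almost-everywhere continuous, so one must verify that the stationary law of $\bY^{\star}$ assigns no mass to the non-uniqueness set, that the one-step transition kernels of $\bY^{(n)}$ converge to that of $\bY^{\star}$ in a strong enough sense, and that these approximations compose well across $T$ steps. The absolute continuity of stationary laws (inherited from the continuous-density assumption on arrivals) and the explicit structure of $\bY^{\star}$ provided by Theorem~\ref{theorem2} are precisely what make this coupling go through.
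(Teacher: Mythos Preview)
Your argument is correct and shares the paper's overall architecture: both proofs rest on Theorem~\ref{theorem2} (positive Harris recurrence of $\bY^\star$) together with the coupling construction from the proof of Lemma~\ref{prop3}, which lifts one-time stationary convergence to convergence of a $T$-step block of decisions/services. You correctly identify this coupling step as the crux, and your justification (absolute continuity of stationary laws, a.e.\ uniqueness of the $\argmax$, and iteration of the one-step coupling) matches exactly what the paper uses.

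Where you diverge from the paper is in the ``endgame'' after the coupling is in place. The paper argues via Nummelin splitting: pick a compact petite set $D$ with $\Gamma^\star(D)>0$, construct an atom, and use finiteness of the mean return time to conclude $P(G_i^\star(T)=0)\to 0$. You instead write down the explicit bound $P(G_i^{(n)}(T)=0)\le E[(1-q)^{N_T^{(n),i}}]$, pass to the limit in the decision counts $N_T^{(n),i}\to N_T^{\star,i}$, and then invoke the ergodic theorem for $\bY^\star$ to get $N_T^{\star,i}\to\infty$ a.s.\ from $P(D^\star(t)\in K_i)>0$. Your route is more elementary and more quantitative (it gives an exponential rate in $N_T^{\star,i}$), and it avoids introducing the splitting machinery; the paper's route is shorter once one is willing to accept the atom construction. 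Both the Nummelin-atom argument and your ergodic-theorem argument ultimately encode the same fact: under $\Gamma^\star$, decisions favoring flow $i$ recur with positive frequency.

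One minor remark: your rate-conservation step (``wasted service vanishes in heavy traffic, hence $E[S_i^\star(t)]\ge\lambda_i^\star>0$'') is valid and is the cleanest way to get $P(D^\star(t)\in K_i)>0$; Lemma~\ref{proposition3} is exactly the ingredient that makes $E[U_i^{(n)}(\infty)]\to 0$. The paper does not spell out this step at all (it simply writes ``We see that\ldots''), so your treatment is in fact more explicit than the original.
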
 

The proof is given in Section~\ref{sec-Ystar}.

\section{Complete Resource Pooling\\ condition}
\label{sec-crp}

To improve exposition, we first give detailed proofs of our main results for the 
special case, when the following {\em complete resource pooling} (CRP) condition
holds. (In Section~\ref{sec-no-crp} we will show how the proof generalizes to the case without the
CRP condition.) Assume that vector $\blambda^{\star}$ is such that
there is a unique (up to scaling) 
outer normal vector $\bnu>\bzero$ to $\bV$ at point $\blambda^{\star}$; we choose $\bnu$ so that $\|\bnu\|=1$.
Denote by 
\begin{align}\label{vstar}
\boldsymbol{V}^{\star} \triangleq \argmax_{\boldsymbol{x}	\in\boldsymbol{V}} 	\boldsymbol{\nu}\cdot \boldsymbol{x}
\end{align}
the outer face of $\bV$ where $\blambda^{\star}$ lies. Given our assumptions on $\blambda^{\star}$, it  lies in the relative 
interior of $\bV^\star$.

By $\bnu_\perp$ we denote the subspace of $\mathbb{R}^{N}$
orthogonal to $\bnu$. 
 For any vector $\boldsymbol{a}$,  we denote by $\boldsymbol{a}_{\star} \triangleq \left(\boldsymbol{a}\cdot {\boldsymbol{\nu}}\right){\boldsymbol{\nu}}$ its orthogonal projection 
on the (one-dimensional) subspace spanned by $\bnu$, and by 
$\boldsymbol{a}_{\perp} \triangleq \boldsymbol{a} - \boldsymbol{a}_{\star}$ 
its orthogonal projection on the $(N-1)$-dimensional subspace $\bnu_\perp$.

The following observations and notations will be useful. There is a $\delta>0$ such that the entire set
\begin{align}\label{deltaball}
B^{\delta}_{\boldsymbol{\lambda^{\star}}} \triangleq \{\boldsymbol{y} \in \boldsymbol{V}^{\star}: \|\boldsymbol{y} -\boldsymbol{\lambda}^{\star} \| \leq \delta   \},
\end{align}
also lies in the relative interior of $\boldsymbol{V}^{\star}$.

\section{Background on general-state-\\space discrete-time \\Markov Chains}
\label{MTresults}

We will briefly discuss some notions and results from~\cite{meyn1992stability} and \cite{hajek} 
on the stability of discrete time Markov Chains (MC), which will be used in later sections. Throughout this section we will assume that the Markov Chain $\boldsymbol{\Phi} = \left\{\Phi(0),\Phi(1),\ldots \right\}$ is evolving on a locally compact separable metric space $\boldsymbol{X}$ whose Borel $\sigma$-algebra will be denoted by $\mathcal{B}$. $P_{\eta}$ and $E_{\eta}$ are used to denote the probabilities and expectations conditional on $\Phi_{0}$ having distribution $\eta$, while $P_{\boldsymbol{x}}$ and $E_{\boldsymbol{x}}$ are used when $\eta$ is concentrated at $\boldsymbol{x}$. The transition function of $\boldsymbol{\Phi}$ is denoted by $P(\boldsymbol{x},A), \boldsymbol{x}\in \boldsymbol{X}, A\in \mathcal{B}$. The iterates $P^{t},~ t=0,1,2, \ldots$, 
are defined inductively by
\begin{align*}
P^{0}\triangleq I, P^{t}\triangleq PP^{t-1}, t\geq 1,
\end{align*}
where $I$ is the identity transition function.

\begin{definition}
(i) $\phi$-irreducibility: A Markov Chain $\boldsymbol{\Phi} = \left\{\Phi(0),\Phi(1),\ldots \right\}$ is called $\phi$ irreducible if there exists a finite measure $\phi$ such that
$\sum\limits_{k=1}^{\infty} P^{k}(\bx,A)>0$ for all $\bx\in \boldsymbol{X}$ whenever $\phi(A)>0$. Measure $\phi$ is called an irreducibility measure.

(ii) Harris Recurrence: If $\boldsymbol{\Phi}$ is $\phi$-irreducible and $P_{x}(\Phi(t)\in A \quad \mbox{i.o.})\equiv 1$ 
whenever $\phi(A)>0$, then $\boldsymbol{\Phi}$ is called Harris recurrent. [Abbreviation 'i.o.' means 'infinitely often'.]

(iii) Invariant Measure: A $\sigma$-finite measure $\pi$ on $\mathcal{B}$ with the property 
\begin{align*}
\pi\left\{A\right\} = \pi P\left\{A\right\} \triangleq \int \pi(d\bx)P(\bx,A), \forall A\in \mathcal{B},
\end{align*}
is called an invariant measure.

(iv) Positive Harris Recurrence: If $\boldsymbol{\Phi}$ is Harris Recurrent with a finite invariant measure $\pi$, then it is called positive Harris Recurrent.

(v) Boundedness in Probability: If for any $\epsilon>0$ and any $\bx\in\boldsymbol{X}$, there exists a compact set $D$ such that 
\begin{align}\label{bip}
\lim\inf\limits_{t\to\infty} P_{\bx} (\Phi(t)\in D) \geq 1-\epsilon,
\end{align}
then the Markov process $\boldsymbol{\Phi}$ is called bounded in probability.

(vi) Small Sets: A set $C$ is called small if for all $\bx\in C$ and some integer $l\ge 1$, we have
\beql{smallsetdef}
P^{l}(\bx,\cdot) \geq \nu(\cdot),
\end{equation}
where $\nu(\cdot)$ is a sub-probability measure, i.e. $\nu(\boldsymbol{X}) \leq 1$.

(vii) For a probability distribution $\boldsymbol{a} = \left(a_{1},a_{2}\ldots\right)$ on $\{1,2,\ldots\}$, the Markov transition function  $K_{\boldsymbol{a}}$ is defined as
\begin{align*}
K_{\boldsymbol{a}} \triangleq \sum\limits_{i=1}^{\infty} a_{i}P^{i}.
\end{align*}

(viii) Petite Sets: A set $A\in\mathcal{B}$ and a sub-probability measure $\psi$ on $\mathcal{B}(\boldsymbol{X})$ are called petite if for some probability distribution $\boldsymbol{a}$ on $\{1,2,\ldots\}$ we have
\begin{align*}
K_{\boldsymbol{a}}(\bx,\cdot)\geq \psi(\cdot), \forall \bx\in A.
\end{align*}

(ix) Non-evanescence: A Markov chain $\boldsymbol{\Phi}$ 
is called non-evanescent if $P_{\bx} \{\boldsymbol{\Phi}\to\infty\} =0$ 
for each $\bx \in \boldsymbol{X}$. 
[Event $\{\boldsymbol{\Phi}\to\infty \}$ consists of the outcomes such that the sequence 
$\Phi(t)$ visits any compact set at most a finite number of times.]

\end{definition} 

The following proposition states some results from~\cite{meyn1992stability}.

\begin{proposition}
\label{MTtheorem}
(i) If a set $A$ is small and for some probability distribution $\ba$ on $\{1,2,\ldots\}$ and a set $B\in\mathcal{B}$, we have
\begin{align}\label{petite1}
\inf_{\bx\in B }  K_{\ba}(\bx,A)>0,
\end{align}
then $B$ is petite. 

(ii) Suppose that every compact subset of $\boldsymbol{X}$ is petite. Then $\boldsymbol{\Phi}$ is positive Harris recurrent if and only if it is bounded in probability. 

(iii) Suppose that every compact subset of $\boldsymbol{X}$ is petite. Then $\boldsymbol{\Phi}$ is Harris recurrent if and only if it is non-evanescent. 
\end{proposition}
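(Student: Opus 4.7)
Plan: I would handle the three parts separately, proving (i) first and then using the resulting petite-set machinery inside the arguments for (ii) and (iii).

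Part (i) reduces to an explicit construction of a probability distribution witnessing petite-ness of $B$. Let $l$ and $\nu$ be the step and sub-probability measure witnessing that $A$ is small, and set $\epsilon = \inf_{\bx \in B} K_{\ba}(\bx, A) > 0$. I would define $\bb$ by shifting $\ba$ by $l$ steps, i.e.\ $b_{i+l} = a_i$ for $i \ge 1$. Then for any $\bx \in B$, Chapman--Kolmogorov gives
\begin{align*}
K_{\bb}(\bx,\cdot) \;=\; \int K_{\ba}(\bx,d\by)\, P^{l}(\by,\cdot) \;\geq\; \int_{A} K_{\ba}(\bx,d\by)\, \nu(\cdot) \;\geq\; \epsilon\,\nu(\cdot).
\end{align*}
So $B$ is petite with witness $\psi = \epsilon \nu$.

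For (ii), the nontrivial direction is that bounded-in-probability plus every compact petite implies positive Harris recurrence. My approach would be: bounded-in-probability supplies, for each $\bx$, a compact $D$ with $\liminf_{t} P^{t}(\bx,D) \ge 1 - \epsilon$, so $D$ is visited with positive probability infinitely often; since $D$ is petite by hypothesis, its petite-witness $\psi_D$ serves as an irreducibility measure, giving $\psi_D$-irreducibility of $\boldsymbol{\Phi}$. Under irreducibility, a petite set contains a small set (a standard Meyn--Tweedie equivalence), so one can apply part (i) in reverse to identify a small set $C \subset D$. Kac's occupation-time formula applied to returns to $C$ then yields a $\sigma$-finite invariant measure, and bounded-in-probability makes the expected return times finite, upgrading $\sigma$-finiteness to a finite invariant measure. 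The converse direction is immediate, since a positive Harris recurrent chain is tight in its stationary law.

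For (iii), the forward direction is classical: a Harris recurrent chain almost surely visits every $\phi$-positive Borel set infinitely often, and some compact has positive $\phi$-measure by $\sigma$-finiteness, so evanescence is ruled out. For the converse, non-evanescence guarantees that for each $\bx$ some compact $C$ is visited infinitely often with positive probability; combined with $C$ petite, this yields $\phi$-irreducibility (with $\phi$ the petite-witness), and the usual $0$--$1$ dichotomy for $\phi$-irreducible chains promotes ``visited with positive probability'' to ``visited almost surely'', which is Harris recurrence. The principal obstacle is the construction of the \emph{finite} invariant measure in (ii): this is where the full Meyn--Tweedie apparatus (small/petite equivalence under irreducibility, regenerative splitting, and Kac's formula) is essential, and a complete write-up would largely reproduce that development rather than give a self-contained argument---which is why the authors state these as a proposition of cited results.
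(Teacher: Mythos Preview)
The paper gives no proof of this proposition: it is stated explicitly as a collection of results quoted from Meyn and Tweedie~\cite{meyn1992stability}, with no argument supplied. Your proposal therefore does strictly more than the paper. Your sketch for part~(i) is correct and is the standard argument (shift the sampling distribution by the small-set lag $l$ and apply Chapman--Kolmogorov). Your outlines for (ii) and (iii) are reasonable roadmaps through the Meyn--Tweedie theory, and you yourself correctly identify that a complete proof would amount to reproducing that development; the paper simply cites it rather than attempting this.
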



The following result is form from~\cite{hajek}. It is stated in a form convenient for its application in this paper.
\begin{proposition}\label{th:hajek}
Let $L(\boldsymbol{x})$ be a non-negative (Lyapunov) function such that the Markov process $\boldsymbol{\Phi}$ satisfies the following two conditions,
for some positive constants $\kappa, \delta, D$:\\
(a) $E\left[L(\Phi(t+1))-L(\Phi(t))| \Phi(t) =\boldsymbol{x}\right]<-{\delta}$, for any state $\bx$ such that  $L(\bx)\ge \kappa>0$.\\
(b) $| L(\Phi(t+1))-L(\Phi(t))| <D$.\\
Then there exist constants $\eta>0$ and $0<\rho<1$ such that
$$
P(L(\Phi(t))\geq u ~|~ L(\Phi(0)) = b) \le
$$
\beql{hajek1}
\rho^{t} \exp(\eta (b-u))
+\frac{1-\rho^{t}}{1-\rho}D\exp(\eta (\kappa-u)), ~u\ge 0.
\end{equation}
\end{proposition}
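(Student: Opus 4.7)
The plan is to construct an exponential Lyapunov function $V(\bx) = \exp(\eta L(\bx))$ for a sufficiently small $\eta > 0$, establish a geometric drift inequality of the form $E[V(\Phi(t+1)) \mid \Phi(t)] \le \rho V(\Phi(t)) + C$, iterate it, and then apply Markov's inequality to recover the tail bound \eqref{hajek1}.

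The first step is to handle states with $L(\bx) \ge \kappa$. Writing $\Delta = L(\Phi(t+1)) - L(\Phi(t))$, condition (a) gives $E[\Delta \mid \Phi(t)=\bx] \le -\delta$ and condition (b) gives $|\Delta| \le D$. Expanding the exponential, I would show
\[
E\bigl[e^{\eta \Delta} \,\bigm|\, \Phi(t)=\bx\bigr] \le 1 + \eta E[\Delta\mid \bx] + \tfrac{\eta^{2}}{2}\, E[\Delta^{2}\mid \bx]\, e^{\eta D} \le 1 - \eta \delta + O(\eta^{2}),
\]
and then choose $\eta>0$ small enough that the right-hand side is at most some $\rho \in (0,1)$. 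This yields $E[V(\Phi(t+1))\mid \Phi(t)=\bx] \le \rho V(\bx)$ on the set $\{L\ge \kappa\}$.

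The second step is to handle the complementary set $\{L < \kappa\}$. There the bounded-increment condition forces $L(\Phi(t+1)) \le \kappa + D$, hence $V(\Phi(t+1)) \le e^{\eta(\kappa+D)}$. Combining both cases, I would obtain a uniform inequality
\[
E\bigl[V(\Phi(t+1)) \,\bigm|\, \Phi(t)\bigr] \le \rho\, V(\Phi(t)) + C\, \mathbf{1}\{L(\Phi(t)) < \kappa\},
\]
where $C$ is of order $D\, e^{\eta \kappa}$ after a careful choice of intermediate constants. Taking expectations and iterating the recursion in $t$ gives
\[
E[V(\Phi(t))] \le \rho^{t} V(\Phi(0)) + \tfrac{1-\rho^{t}}{1-\rho}\, C.
\]

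The final step is to apply Markov's inequality: $P(L(\Phi(t)) \ge u \mid L(\Phi(0))=b) \le e^{-\eta u}\, E[V(\Phi(t)) \mid L(\Phi(0))=b]$, which, after substituting the iterated bound and $V(\Phi(0)) = e^{\eta b}$, produces exactly the two-term expression in \eqref{hajek1} with constant $D\exp(\eta\kappa)$ on the second term. The main obstacle is not the overall argument, which is standard, but the bookkeeping required to arrange the constants so that the coefficient of the second term comes out as $D\exp(\eta \kappa)$ rather than the looser $\exp(\eta(\kappa+D))$ that a naive Taylor bound gives; this requires using that $e^{\eta D} \le 1 + \eta D \cdot \text{const}$ for small $\eta$ and absorbing the $O(\eta^{2})$ terms into the choice of $\rho$.
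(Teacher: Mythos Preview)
The paper does not prove this proposition at all: it is quoted from Hajek's 1982 paper \cite{hajek} and stated without proof, ``in a form convenient for its application in this paper.'' So there is no in-paper argument to compare against.

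Your approach is the standard one and is essentially Hajek's original proof: pass to the exponential supermartingale $e^{\eta L}$, use the bounded-jump condition to turn the negative drift of $L$ into a multiplicative contraction $E[e^{\eta \Delta}\mid \Phi(t)]\le \rho<1$ on $\{L\ge\kappa\}$, bound the process crudely on $\{L<\kappa\}$, iterate, and finish with Markov's inequality. The outline is correct. Your remark at the end about the constant in the second term is also apt: a direct argument naturally produces a coefficient of order $e^{\eta(\kappa+D)}$ rather than exactly $D\,e^{\eta\kappa}$, and matching the specific form \eqref{hajek1} (as opposed to a bound of the same shape with possibly different constants) requires a slightly more careful choice of $\eta$ and $\rho$ so that the slack can be written as $D$ times $e^{\eta\kappa}$. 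Since the proposition only asserts the \emph{existence} of suitable $\eta$ and $\rho$, this is a matter of bookkeeping rather than a genuine obstacle.
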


\section{Queue length process}
\label{sec-queue-length}

Recall that $\bQ^{(n)}(\cdot)$ is the queue length process for the $n$-th system under MaxWeight.
In this section we prove that for all $n$, the process $\boldsymbol{Q}^{(n)}(\cdot)$ is positive Harris recurrent.
The proof uses a Lyapunov drift argument which is fairly standard (in fact, there is more than one way to prove stability 
of $\boldsymbol{Q}^{(n)}(\cdot)$), except, since our state space is continuous, as a first step we will show that all compact sets are petite.

Some  simple preliminary observations  are given in the following lemma. 

\begin{lemma}
\label{lemma1}
(i) The points $\bx \in \R_+^N$, such that\\ $k\in \argmax_{\ell} (\bgamma \bx) \cdot \bmu^l$  is non-unique, form a set of zero Lebesgue measure.
Moreover, if $\bx > \bzero$ is such that  $k \in \argmax_{\ell} (\bgamma \bx) \cdot \bmu^l$  is unique, then for a sufficiently small $\epsilon>0$ the decision $k$
is also the unique element of $\argmax_{\ell} (\bgamma \by) \cdot \bmu^l$ for all $\by \in B_{\epsilon}(\bx)$.\\
(ii) The one-step transition function $P^{(n)}(\bx,\cdot)$ of the process $\bQ^{(n)}(\cdot)$ is such that,  uniformly in $n$ and  $\bx \in \R_+^N$, the distribution
$P^{(n)}(\bx,\cdot)$ is absolutely continuous with the density upper bounded by $\delta^*$ and, in the rectangle 
 $[\bzero, \bA^{max} - \bS^{max}]$, lower bounded
by $\delta_*$.
\end{lemma}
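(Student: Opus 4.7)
The plan is to handle the two parts separately, as they are essentially independent routine claims.

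\textbf{For part (i)}, I would begin by observing that $\argmax_\ell(\bgamma\bx)\cdot\bmu^\ell$ is non-unique iff $(\bgamma\bx)\cdot(\bmu^k-\bmu^\ell)=0$ for some pair $k\ne\ell$, i.e., iff $\bx$ lies on one of the hyperplanes
\[
H_{k\ell}\;:=\;\Bigl\{\bx\in\R^N:\sum_{i=1}^N \gamma_i(\mu^k_i-\mu^\ell_i)\,x_i=0\Bigr\}.
\]
Since $\bgamma>\bzero$ and the $\bmu^k$ are pairwise distinct, at least one coefficient $\gamma_i(\mu^k_i-\mu^\ell_i)$ is nonzero, so each $H_{k\ell}$ is a genuine $(N-1)$-dimensional hyperplane, hence Lebesgue-null; a finite union of null sets is null. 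For the local-uniqueness claim, at $\bx>\bzero$ with unique maximizer $k$ I set $\Delta:=\min_{\ell\ne k}(\bgamma\bx)\cdot(\bmu^k-\bmu^\ell)>0$ and invoke continuity of each linear functional $\by\mapsto(\bgamma\by)\cdot(\bmu^k-\bmu^\ell)$ to preserve the strict inequalities throughout a small enough ball around $\bx$.

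\textbf{For part (ii)}, the key observation is that the MaxWeight decision $k=k(\bx)$ is a deterministic function of the state (after fixing the tie-breaking rule), so conditional on $\bQ^{(n)}(t)=\bx$ the next state equals $\bc_j(\bx)+\bA^{(n)}(t)$ with probability $p^{k,j}$, where $\bc_j(\bx):=(\bx-\bv^{k,j})^+$ is deterministic. The transition density is therefore the mixture
\[
p^{(n)}(\bx,\by)\;=\;\sum_{j=1}^{\mathcal{O}_k} p^{k,j}\,f^{(n)}\bigl(\by-\bc_j(\bx)\bigr),
\]
and the upper bound $p^{(n)}\le\delta^*$ is immediate from $f^{(n)}\le\delta^*$ and $\sum_j p^{k,j}=1$. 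For the lower bound I would take $\by$ with $\bz:=\by-\bx\in[\bzero,\bA^{\max}-\bS^{\max}]$ and check, coordinate by coordinate and for each $j$, that $\by-\bc_j(\bx)$ lands inside the support $[\bzero,\bA^{\max}]$ of $f^{(n)}$: in coordinate $i$, if $x_i\ge v^{k,j}_i$ then $y_i-\bc_j(\bx)_i = z_i+v^{k,j}_i\in[0,A_i^{\max}]$, while if $x_i<v^{k,j}_i\le S_i^{\max}$ then $y_i-\bc_j(\bx)_i = x_i+z_i\le S_i^{\max}+(A_i^{\max}-S_i^{\max})=A_i^{\max}$. Hence every term in the mixture is at least $\delta_*$, yielding $p^{(n)}(\bx,\by)\ge\delta_*$ uniformly in $n,\bx$ on a rectangle of the stated dimensions $\bA^{\max}-\bS^{\max}$.

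\textbf{Expected obstacle.} Nothing here is truly delicate; the one mild nuisance is that the positive-part in $\bc_j(\bx)$ makes the shift a piecewise-affine function of $\bx$ and necessitates the small case split above. What the downstream petiteness arguments actually use is the existence of a uniform $\delta_*$-thick minorizing density on a rectangle of the fixed volume $\prod_i(A_i^{\max}-S_i^{\max})$, and the verification above produces exactly this.
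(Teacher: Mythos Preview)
Your argument is correct and supplies exactly the details the paper leaves implicit; the paper's own proof is a one-line remark that (i) follows from finiteness of the decision set and (ii) from the arrival-density assumptions together with $\bS^{\max}<\bA^{\max}$. Two small remarks: in (i) your ``iff'' should strictly be ``only if'' (equality on $H_{k\ell}$ need not make $k$ and $\ell$ the actual maximizers), though the containment direction is all you use; and in (ii) you have, correctly, established the lower bound on the \emph{shifted} rectangle $\bx+[\bzero,\bA^{\max}-\bS^{\max}]$ rather than the literal $[\bzero,\bA^{\max}-\bS^{\max}]$ of the stated lemma, which is exactly how the result is invoked in the subsequent proofs of Lemmas~\ref{lemma2} and~\ref{prop2}.
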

\begin{proof}
Statement (i) easily follows from the finiteness of the set of decisions $k$. Statement (ii) easily follows from the assumptions on the arrival process distribution
and the fact that $\bS^{max} < \bA^{max}$.
\end{proof}

\begin{lemma}\label{lemma2}
For any $\bx > \bzero$, there exists $\epsilon>0$ such that the set $B_{\epsilon}(\bx)$ is small for the process $\bQ^{(n)}(\cdot)$.
\end{lemma}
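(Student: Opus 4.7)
The plan is to exhibit, for sufficiently small $\epsilon>0$, a non-trivial sub-probability measure $\nu$ such that $P^{(n)}(\by,\cdot)\ge\nu(\cdot)$ for every $\by\in B_{\epsilon}(\bx)$, which verifies smallness directly with $l=1$. The construction rests on the uniform lower bound $\delta_{*}$ of the arrival density on $[\bzero,\bA^{\max}]$.

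First, I would restrict the active MaxWeight decisions. Let $K^{\star}:=\argmax_{\ell}(\bgamma\bx)\cdot\bmu^{\ell}$, which is a finite set (possibly non-singleton if $\bx$ lies on a tie-hyperplane). By continuity of each map $\by\mapsto(\bgamma\by)\cdot\bmu^{\ell}$ combined with the strict inequality $(\bgamma\bx)\cdot\bmu^{k^{*}}>(\bgamma\bx)\cdot\bmu^{\ell}$ for $k^{*}\in K^{\star}$, $\ell\notin K^{\star}$, any MaxWeight decision chosen at some $\by\in B_{\epsilon}(\bx)$ must lie in $K^{\star}$, once $\epsilon$ is small enough (this extends the argument underlying Lemma~\ref{lemma1}(i) to the possibly non-unique case). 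For each $k\in K^{\star}$, fix an outcome index $j(k)$ with $p^{k,j(k)}>0$ and set $\boldsymbol{w}^{k}:=(\bx-\boldsymbol{v}^{k,j(k)})^{+}$.

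Next, I would construct a common region of high one-step transition density. The map $\by\mapsto(\by-\boldsymbol{v}^{k,j(k)})^{+}$ is componentwise $1$-Lipschitz, so the post-service vector remains within $\epsilon$ of $\boldsymbol{w}^{k}$ coordinatewise, uniformly over $\by\in B_{\epsilon}(\bx)$. Hence whenever $\bz\in\boldsymbol{w}^{k}+[\epsilon\bone,\bA^{\max}-\epsilon\bone]$, the argument $\bz-(\by-\boldsymbol{v}^{k,j(k)})^{+}$ of the arrival density $f^{(n)}$ lies in $[\bzero,\bA^{\max}]$, so the one-step transition density at $\bz$ is bounded below by $p^{k,j(k)}\delta_{*}$ whenever MaxWeight selects $k$ at $\by$. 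Define
\[
R:=\bigcap_{k\in K^{\star}}\bigl(\boldsymbol{w}^{k}+[\epsilon\bone,\bA^{\max}-\epsilon\bone]\bigr).
\]
Since $w^{k}_{i}=(x_{i}-v^{k,j(k)}_{i})^{+}$ varies over $k\in K^{\star}$ within an interval of length at most $S^{\max}_{i}<A^{\max}_{i}$, the rectangle $R$ has positive Lebesgue measure for every sufficiently small $\epsilon$.

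Setting $p_{\min}:=\min_{k\in K^{\star}}p^{k,j(k)}>0$, the measure $\nu(B):=p_{\min}\delta_{*}\sL(B\cap R)$ is non-trivial and satisfies $P^{(n)}(\by,B)\ge\nu(B)$ for every Borel $B$ and every $\by\in B_{\epsilon}(\bx)$, proving that $B_{\epsilon}(\bx)$ is small. The main obstacle is ensuring that $R$ is non-degenerate; this is exactly where the model assumption $\bA^{\max}>\bS^{\max}$ does its work, forcing the spread of the possible post-service vectors (bounded by $\bS^{\max}$) to be strictly less than the width of the arrival support, and thereby leaving a common region of high density across all decisions in $K^{\star}$.
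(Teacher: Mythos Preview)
Your argument is correct, but it works harder than necessary compared with the paper's proof. The paper does not condition on the MaxWeight decision at all: it simply invokes Lemma~\ref{lemma1}(ii), whose content is that \emph{regardless} of which decision $k$ is selected and which outcome $\boldsymbol{v}^{k,j}$ is realized, the post-service state $(\by-\boldsymbol{v}^{k,j})^{+}$ always lies in the rectangle $[\by-\bS^{\max},\by]$ (componentwise, clipped at $\bzero$), so the one-step transition density from $\by$ is at least $\delta_{*}$ everywhere on $[\by,\by+\bA^{\max}-\bS^{\max}]$. The paper then fixes the single rectangle $H=[\bx+\tfrac{1}{3}(\bA^{\max}-\bS^{\max}),\,\bx+\tfrac{2}{3}(\bA^{\max}-\bS^{\max})]$ and takes $\epsilon<\tfrac{1}{3}\min_i(A^{\max}_i-S^{\max}_i)$ so that $H\subset[\by,\by+\bA^{\max}-\bS^{\max}]$ for every $\by\in B_{\epsilon}(\bx)$; the minorizing measure is then $\delta_{*}\,\sL|_{H}$.

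Your route---restricting to $K^{\star}$, selecting one outcome $j(k)$ per $k\in K^{\star}$, and intersecting the resulting rectangles---reaches the same conclusion but introduces two avoidable complications: the identification of $K^{\star}$ is unnecessary (the density lower bound holds uniformly over all decisions and outcomes, not just the active ones), and by keeping only a single outcome per decision you pick up the extra factor $p_{\min}$ in the minorizing constant, whereas the paper gets the full $\delta_{*}$. Neither point is an error; your proof is valid, just less direct.
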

\begin{proof}
Consider rectangle\\ $H=[\bx+(1/3)(\bA^{max} - \bS^{max}), \bx+(2/3)(\bA^{max} - \bS^{max})]$.
Choose $\epsilon>0$ small enough, so that $\epsilon < (1/3)\min_i (A^{max}_i - S^{max}_i)$ and  $\epsilon < \min_i x_i$.
Then, $B_{\epsilon}(\bx)$ lies in the interior of $\R_+^N$ and every point in $B_{\epsilon}(\bx)$ is strictly smaller than any point in $H$.
Lemma~\ref{lemma1}(ii) implies that for any $\by \in B_{\epsilon}(\bx)$, the distribution $P^{(n)}(\by,\cdot)$ has a density lower bounded 
by $\delta_*$ in $H$.
\end{proof}

 \begin{lemma}
\label{prop2}
For the Markov process $\boldsymbol{Q}^{(n)}(\cdot)$, any compact set is petite. 
\end{lemma}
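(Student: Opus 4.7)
The plan is to apply Proposition~\ref{MTtheorem}(i) with a single small set that is reachable in one step from every state in $\R_+^N$, not just from compact sets. The key observation is that Lemma~\ref{lemma1}(ii) gives a one-step lower bound on the transition density that is uniform over \emph{all} starting points $\bx\in\R_+^N$, because the lower bound on the density holds on the rectangle $[\bzero,\bA^{\max}-\bS^{\max}]$ regardless of $\bx$.

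First, I would fix an interior reference point. Since $\bA^{\max}>\bS^{\max}$, I can choose $\bx^\star>\bzero$ with $\bx^\star < \bA^{\max}-\bS^{\max}$ componentwise. By Lemma~\ref{lemma2}, there exists $\epsilon^\star>0$ such that $B_{\epsilon^\star}(\bx^\star)$ is small; shrinking $\epsilon^\star$ if necessary, I can also ensure $B_{\epsilon^\star}(\bx^\star)\subset (\bzero,\bA^{\max}-\bS^{\max})$. Denote $A:=B_{\epsilon^\star}(\bx^\star)$ and let $c:=\delta_\star \,\sL(A)>0$.

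Next, I would establish the uniform one-step reachability of $A$. For any $\by\in\R_+^N$, Lemma~\ref{lemma1}(ii) gives that $P^{(n)}(\by,\cdot)$ admits a density at least $\delta_\star$ on $[\bzero,\bA^{\max}-\bS^{\max}]\supset A$, so
\begin{equation*}
P^{(n)}(\by,A)\;\geq\;\delta_\star\,\sL(A)\;=\;c\;>\;0.
\end{equation*}
Choose the trivial probability distribution $\ba$ on $\{1,2,\ldots\}$ concentrated at $1$, so that $K_{\ba}=P^{(n)}$. Then for any Borel set $B\subset\R_+^N$ (in particular any compact set),
\begin{equation*}
\inf_{\by\in B}K_{\ba}(\by,A)\;\geq\;c\;>\;0.
\end{equation*}
Since $A$ is small, Proposition~\ref{MTtheorem}(i) implies that $B$ is petite. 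Specializing to $B=C$ for an arbitrary compact set $C\subset\R_+^N$ gives the claim.

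I do not anticipate a real obstacle: the continuous-state-space issue that usually makes such arguments subtle is already absorbed into Lemma~\ref{lemma1}(ii) and Lemma~\ref{lemma2}. The only small point to be careful about is that the lower bound in Lemma~\ref{lemma1}(ii) is uniform in the starting state $\by$, which is what allows the inf over $B$ to be bounded away from zero without invoking compactness; compactness of $C$ is in fact unnecessary for this particular argument, though it is all that is asked for in the lemma statement.
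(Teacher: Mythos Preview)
Your argument has a genuine gap stemming from a misreading of Lemma~\ref{lemma1}(ii). The literal wording of that lemma is admittedly ambiguous, but it does \emph{not} assert that $P^{(n)}(\by,\cdot)$ has density bounded below by $\delta_*$ on the \emph{fixed} rectangle $[\bzero,\bA^{\max}-\bS^{\max}]$ uniformly over all $\by\in\R_+^N$. The correct content (and the way the paper uses it in the proofs of Lemmas~\ref{lemma2} and~\ref{prop2}) is that the \emph{increment} $\bQ^{(n)}(t+1)-\bQ^{(n)}(t)$ has density bounded below on $[\bzero,\bA^{\max}-\bS^{\max}]$; equivalently, $P^{(n)}(\by,\cdot)$ has density bounded below on the $\by$-shifted rectangle $[\by,\by+\bA^{\max}-\bS^{\max}]$. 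To see that your reading cannot be right, note that from the update $Q_i(t+1)=(Q_i(t)-S_i(t))^+ + A_i(t)$ one has $Q_i(t+1)\ge Q_i(t)-S_i^{\max}$ deterministically. Hence if $\by$ is large (any component exceeding $A_i^{\max}$), the process simply cannot land in your target set $A\subset[\bzero,\bA^{\max}-\bS^{\max}]$ in one step, and $P^{(n)}(\by,A)=0$. Your uniform lower bound $\inf_{\by}P^{(n)}(\by,A)\ge c>0$ therefore fails, and with it the appeal to Proposition~\ref{MTtheorem}(i).

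This is precisely why the paper's proof is a two-stage argument and why compactness of $G$ is actually used. First one shows that from any $\by\in G$ the process can, with probability bounded away from zero, be driven down to a small neighborhood of the origin in some fixed number $\tau$ of steps (here boundedness of $G$ is essential to get a uniform $\tau$ and a uniform probability). Then, from that small neighborhood, repeated positive increments (this is where the $\by$-shifted lower bound from Lemma~\ref{lemma1}(ii) is used) move the state into the small ball $B_\epsilon(\bx)$ in a further $\tau_1$ steps. Combining the two stages gives a uniform lower bound on $P^{\tau+\tau_1}(\by,B_\epsilon(\bx))$ over $\by\in G$, and then Proposition~\ref{MTtheorem}(i) applies. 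Your shortcut avoids the first stage, but that stage is not optional.
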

\begin{proof}
Consider a compact set $G\subset \R_+^N$; of course, $G$ is bounded. Fix arbitrary $\bx > 0$ and pick $\epsilon>0$ small enough, so that 
$B_{\epsilon}(\bx)$ is small and lies in the interior of $\R_+^N$. Pick small $\delta>0$ such that any point in $\{\|y\| \le \delta\}$ is strictly less than
any point in $B_{\epsilon}(\bx)$.

It is easy to verify that there exists an integer $\tau>0$ such that the following holds uniformly in $\bQ^{(n)}(0) \in G$:
\beql{eq-prob-down}
P\{\|\bQ^{(n)}(\tau)\| \le \delta\} \ge \alpha ~\mbox{for some}~ \alpha>0.
\end{equation}
Indeed, suppose first that for all 
$t=0,1, \ldots$, $\bA^{(n)}(t) = \bzero$. (This is a probability zero event, of course, but let's consider it anyway.)
Then, for any $\delta_3>0$ there exist $\delta_1, \delta_2 > 0$, such that the following holds:
with probability at least some $\delta_1>0$, the norm 
$\|\bQ^{(n)}(t)\|$ decreases at least by some $\delta_2>0$, at each time $t$ when $\|\bQ^{(n)}(t)\| \ge \delta_3 >0$.
This implies that for some $\tau>0$ and $\delta_4>0$, $\bQ^{(n)}(0) \in G$ implies $P\{\|\bQ^{(n)}(\tau)\| \le \delta_3\} \ge \delta_4$.
Now, using this and the fact that with a positive probability $\bA^{(n)}(t)$ can be ``very close to $\bzero$,'' we can easily 
establish property \eqn{eq-prob-down}. (We omit rather trivial details.)

Next, it is easy to show that there  exists an integer $\tau_1>0$ such that the following holds uniformly in $\|\bQ^{(n)}(0)\| \le \delta$:
\beql{eq-prob-up}
P\{\|\bQ^{(n)}(\tau_1)\| \in B_{\epsilon}(\bx)\} \ge \alpha_1 ~\mbox{for some}~ \alpha_1>0.
\end{equation}
Here we use Lemma~\ref{lemma1}(ii), which shows that at each time step the distribution of the increments of $\boldsymbol{Q}^{(n)}(\cdot)$ has a density
lower bounded by $\delta_*$ in $[\bzero, \bA^{max} - \bS^{max}]$.

From \eqn{eq-prob-down} and \eqn{eq-prob-up} we see that uniformly in $\bQ^{(n)}(0) \in G$, 
$P\{\|\bQ^{(n)}(\tau+\tau_1)\| \in B_{\epsilon}(\bx)\} \ge \alpha \alpha_1$. Application of Theorem~\ref{MTtheorem}(i)  shows that $G$ is petite (and, moreover, that it is small).
\end{proof}

To prove stability, we will apply Proposition~\ref{MTtheorem} which requires the following
\begin{lemma}\label{prop1}
Consider the scalar projection $\|\sqrt{\boldsymbol{\gamma}}\boldsymbol{Q}^{(n)}(\cdot)\|,$ $ t = 0,1,\ldots$ of the the Markov process $\boldsymbol{Q}^{(n)}$ starting with a fixed
 initial state $\boldsymbol{Q}^{(n)}(0)$, such that $\|\sqrt{\boldsymbol{\gamma}}\boldsymbol{Q}^{(n)}(0)\| = b$. Then, uniformly 
on all large $n$ we have,
\begin{align}\label{hajek2}
P(\|\sqrt{\boldsymbol{\gamma}}\boldsymbol{Q}^{(n)}(t)\|\geq u) &\leq \rho^{t} \exp(\eta (b-u))\notag\\ 
&\qquad+\frac{1-\rho^{t}}{1-\rho}D\exp(\eta (\kappa-u)), ~u \ge 0,
\end{align}
for some constants $\eta,\kappa,D>0$ and $1>\rho>0$ which depend on $n$.
Consequently, the process $\boldsymbol{Q}^{(n)}(\cdot)$ is bounded in probability.
\end{lemma}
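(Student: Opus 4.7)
The plan is to apply Hajek's bound (Proposition~\ref{th:hajek}) to the scalar Lyapunov function $L(\bx) = \|\sqrt{\bgamma}\bx\|$, by first establishing the standard quadratic MaxWeight drift for $V(\bx) = L(\bx)^2 = \sum_i \gamma_i x_i^2$ and then transferring it to $L$ via Jensen's inequality together with the norm equivalence $L(\bx)\asymp\|\bx\|$. Once (a) negative drift and (b) bounded increments are verified for $L$, the bound \eqn{hajek2} follows directly; boundedness in probability is then a one-line consequence of letting $t\to\infty$ and then $u\to\infty$ in that bound.

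For the quadratic drift, I would expand $(Q_i(t)+A_i(t)-S_i(t)+U_i(t))^2 - Q_i(t)^2$, use $U_i(t)Q_i(t)=0$, and use the uniform bound on each term. A standard computation gives
\begin{align*}
&E\!\left[V(\bQ^{(n)}(t{+}1))-V(\bQ^{(n)}(t))\,\big|\,\bQ^{(n)}(t)=\bx\right]\\
&\le 2(\bgamma\bx)\cdot\bigl(\blambda^{(n)}-E[\bS^{(n)}(t)\mid\bx]\bigr) + C,
\end{align*}
with $C$ depending only on $\bA^{\max}$ and $\bgamma$. Since under MaxWeight $E[\bS^{(n)}(t)\mid\bx] = \bmu^{k(\bx)}$ with $k(\bx)\in\argmax_k(\bgamma\bx)\cdot\bmu^k$, and since $\blambda^{(n)}$ lies in the interior of $\bV$, I can pick $\epsilon_n>0$ with $(1+\epsilon_n)\blambda^{(n)}\in\bV$; this yields $(\bgamma\bx)\cdot(\blambda^{(n)}-E[\bS^{(n)}\mid\bx])\le -\epsilon_n(\bgamma\bx)\cdot\blambda^{(n)}\le -c_n\|\bx\|$ for $c_n := \epsilon_n\min_i(\gamma_i\lambda_i^{(n)})>0$ (here I use $\blambda^{(n)}>\bzero$, which holds for all large $n$ since $\blambda^\star>\bzero$). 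Hence the $V$-drift is at most $-2c_n\|\bx\|+C$.

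To transfer this to $L=\sqrt{V}$, set $W:=E[V(\bQ^{(n)}(t{+}1))\mid\bx]$. By Jensen's inequality, $E[L(\bQ^{(n)}(t{+}1))\mid\bx]\le\sqrt{W}$. Whenever $W\le V(\bx)$ (which holds once $2c_n\|\bx\|\ge C$), the identity $\sqrt{W}-\sqrt{V(\bx)} = (W-V(\bx))/(\sqrt{W}+\sqrt{V(\bx)})$ and $\sqrt{W}+\sqrt{V(\bx)}\le 2L(\bx)$ give
\[
E[L(\bQ^{(n)}(t{+}1))\mid\bx] - L(\bx) \;\le\; \frac{-2c_n\|\bx\|+C}{2L(\bx)}.
\]
Because $L(\bx)\le(\max_i\sqrt{\gamma_i})\|\bx\|$, the ratio $\|\bx\|/L(\bx)$ is bounded below by a positive constant, so the right-hand side is bounded above by some $-\delta_n<0$ once $L(\bx)\ge\kappa_n$, establishing (a). Condition (b) follows from $|L(\bQ^{(n)}(t{+}1))-L(\bQ^{(n)}(t))|\le\|\sqrt{\bgamma}(\bA^{(n)}(t)-\bS^{(n)}(t)+\bU^{(n)}(t))\|\le D_n$ for a bounded constant $D_n$.

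Applying Proposition~\ref{th:hajek} then yields \eqn{hajek2} directly. For boundedness in probability, given any $\epsilon>0$ and $\bx$, choose $u$ so that $\frac{D_n}{1-\rho_n}\exp(\eta_n(\kappa_n-u))<\epsilon/2$, and note that the $\rho_n^t$ term is eventually at most $\epsilon/2$; the compact set $\{\by:L(\by)\le u\}$ then witnesses \eqn{bip}. The main technical obstacle is Step~2, the passage from a $V$-drift of order $-\|\bx\|$ to an $L$-drift bounded above by a \emph{strictly negative constant}: this requires both Jensen's inequality and the observation that $\|\bx\|/L(\bx)$ stays bounded away from $0$, so that the drift of $L$ does not decay to zero as $\|\bx\|\to\infty$.
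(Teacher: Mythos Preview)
Your argument is correct and follows the same overall skeleton as the paper's: take $L(\bx)=\|\sqrt{\bgamma}\bx\|$, bound the drift of $V=L^2$, transfer to $L$ via concavity/Jensen, check bounded increments, and invoke Proposition~\ref{th:hajek}. One small slip: in this model $U_i(t)Q_i(t)$ is \emph{not} identically zero (queue lengths are continuous while service amounts are discrete, so $U_i(t)>0$ can occur with $Q_i(t)>0$); what is true---and all you need---is that $Q_i(t)U_i(t)$ is uniformly bounded because $U_i(t)>0$ forces $Q_i(t)<S_i^{\max}$. This is exactly the constant $b_2$ in the paper's display.

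Where you genuinely differ from the paper is in bounding the key cross term $(\bgamma\bx)\cdot(\blambda^{(n)}-E[\bS\mid\bx])$. You use the classical Tassiulas--Ephremides interior-point trick: pick $\epsilon_n>0$ with $(1+\epsilon_n)\blambda^{(n)}\in\bV$ and conclude the cross term is at most $-\epsilon_n(\bgamma\bx)\cdot\blambda^{(n)}\le -c_n\|\bx\|$. The paper instead routes through the CRP geometry: it writes $\blambda^{(n)}-E[\bS\mid\bx]=(\blambda^{(n)}-\blambda^\star)+(\blambda^\star-E[\bS\mid\bx])$, and uses the normal vector $\bnu$ and the ball $B_{\blambda^\star}^{\delta}\subset\bV^\star$ to get the two-piece bound $-2\epsilon\|(\bgamma\bQ)_\star\|-\delta\|(\bgamma\bQ)_\perp\|$, then argues one of the two projections must be at least $\|\bgamma\bQ\|/\sqrt{2}$. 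Your route is shorter, entirely elementary, and does not touch the CRP structure at all; the paper's route rehearses the $\star/\perp$ decomposition that is reused verbatim in the proof of Lemma~\ref{lemma3}, so it serves double duty there. Both yield $n$-dependent constants (yours through $\epsilon_n\to 0$, theirs through $\epsilon=\epsilon(n)$), which is exactly what the lemma claims.
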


 \begin{proof}
We will use notation $L(\boldsymbol{x})  = \|\sqrt{\boldsymbol{\gamma}} \boldsymbol{x}\|$. Then \\$L(\boldsymbol{Q}^{(n)}(0))=b$.
Clearly, $|L(\boldsymbol{Q}^{(n)}(t+1))-L(\boldsymbol{Q}^{(n)}(t))|$ is uniformly bounded by a constant, given our assumptions on the arrival and service processes.
We will show that the drift (average increment) of $L(\bQ^{(n)}(t+1))) - L(\bQ^{(n)}(t)))$ is upper bounded by some $-\tilde{\delta} < 0$ when 
$\|L(\bQ^{(n)}(t))\| \ge \kappa$ for some $\kappa>0$. 
  
 Consider a fixed $\boldsymbol{Q}^{(n)}(t)$ and denote $\Delta L = E[L(\boldsymbol{Q}^{(n)}(t+1)) -L(\boldsymbol{Q}^{(n)}(t))]$. Clearly,
  \begin{align}\label{eq:5}
 \Delta L  &= E\|\sqrt{\boldsymbol{\gamma}} \boldsymbol{Q}^{(n)}(t+1)\|  - \| \sqrt{\boldsymbol{\gamma}} \boldsymbol{Q}^{(n)}(t)\| \notag\\
  &\leq \frac{1}{2\| \sqrt{\boldsymbol{\gamma}} \boldsymbol{Q}^{(n)}(t)\| } \left( E\|\sqrt{\boldsymbol{\gamma}} \boldsymbol{Q}^{(n)}(t+1)\|^{2} -\| \sqrt{\boldsymbol{\gamma}} \boldsymbol{Q}^{(n)}(t)\|^{2} \right), 
  \end{align}
  where the inequality follows from the concavity of the function $\sqrt{x}$.
  Substitute the value of $\boldsymbol{Q}^{(n)}(t+1)$ from equation~(\ref{eq:1}), concentrate on the numerator of the above expression to obtain,
  \begin{align}\label{eq:6}
 &  E\|\sqrt{\boldsymbol{\gamma}} \boldsymbol{Q}^{(n)}(t+1)\|^{2}-\|\sqrt{\boldsymbol{\gamma}} \boldsymbol{Q}^{(n)}(t)\|^{2} \notag\\ 
  &= E\| \sqrt{\boldsymbol{\gamma}} \boldsymbol{Q}^{(n)}(t)+ \sqrt{\boldsymbol{\gamma}}\left( \boldsymbol{A}^{(n)}(t)-\boldsymbol{S}^{(n)}(t)+\boldsymbol{U}^{(n)}(t)\right)  \|^{2}\notag\\
&\qquad\qquad  -\|\sqrt{\boldsymbol{\gamma}} \boldsymbol{Q}^{(n)}(t)\|^{2}\notag\\
 &= E\left[ \|\sqrt{\boldsymbol{\gamma}}( \boldsymbol{A}^{(n)}(t)-\boldsymbol{S}^{(n)}(t)+\boldsymbol{U}^{(n)}(t)) \|^{2}\right.\notag\\
& \qquad \left.+ 2\left( \sqrt{\boldsymbol{\gamma}}\boldsymbol{Q}^{(n)}(t) \right)\cdot \left( \sqrt{\boldsymbol{\gamma}}\left( \boldsymbol{A}^{(n)}(t)-\boldsymbol{S}^{(n)}(t)+\boldsymbol{U}^{(n)}(t)\right)  \right)\right]\notag\\
  & = E\left[\|\sqrt{\boldsymbol{\gamma}}( \boldsymbol{A}^{(n)}(t)-\boldsymbol{S}^{(n)}(t)+\boldsymbol{U}^{(n)}(t)) \|^{2} \right.\notag\\
  &\qquad \left.+ 2\left(\boldsymbol{\gamma} \boldsymbol{Q}^{(n)}(t)\right)\cdot (\boldsymbol{A}^{(n)}(t)-\boldsymbol{S}^{(n)}(t)+\boldsymbol{U}^{(n)}(t)) \right]\notag\\
&= E\left[\|\sqrt{\boldsymbol{\gamma}}( \boldsymbol{A}^{(n)}(t)-\boldsymbol{S}^{(n)}(t)+\boldsymbol{U}^{(n)}(t))\|^{2} \right.\notag\\
&\left. + 2\left(\boldsymbol{\gamma} \boldsymbol{Q}^{(n)}(t)\right)\cdot \boldsymbol{U}^{(n)}(t) \right. \notag\\
& \qquad \qquad \qquad \left.+ 2\left(\boldsymbol{\gamma} \boldsymbol{Q}^{(n)}(t)\right)\cdot \left(\boldsymbol{A}^{(n)}(t)-\boldsymbol{S}^{(n)}(t)\right)\right]\notag\\
&\leq b_1 + b_2 + 2E\left[\left(\boldsymbol{\gamma} \boldsymbol{Q}^{(n)}(t)\right)\cdot \left(\boldsymbol{A}^{(n)}(t)-\boldsymbol{S}^{(n)}(t)\right) |\boldsymbol{Q}^{(n)}(t)\right],
\end{align}
where  $b_{1}$ is a uniform bound on $\|\sqrt{\boldsymbol{\gamma}}( \boldsymbol{A}^{(n)}(t)-\boldsymbol{S}^{(n)}(t)+\boldsymbol{U}^{(n)}(t)) \|^{2} $,
and 
  $b_2$ is a uniform bound on $\|2\left(\boldsymbol{\gamma} \boldsymbol{Q}^{(n)}(t)\right)\cdot \boldsymbol{U}^{(n)}(t)\|$ which
follows from the property that $U_i (t)>0$ only when $Q_i(t)$ is sufficiently small. 
  
To simplify exposition and avoid introducing additional notation, let us assume that
$\boldsymbol{\lambda}^{(n)} -\boldsymbol{\lambda}^{\star} = -\epsilon  \boldsymbol{\nu}$ for some $\epsilon>0$. 
(If not, then instead of $\boldsymbol{\lambda}^{\star}$ in this proof we can use $\boldsymbol{\lambda}^{\star\star}$, which the 
orthogonal projection of $\boldsymbol{\lambda}^{(n)}$ on $\bV^{\star}$.)
Combining~(\ref{eq:5}) and (\ref{eq:6}), we obtain
  \begin{align}\label{eq:7}
  2\|\sqrt{\boldsymbol{\gamma}} \boldsymbol{Q}^{(n)}(t)\|  \Delta L & \leq  b_{1} +b_{2}\notag \\
  &\quad + 2E\left[ \left(\boldsymbol{\gamma}\boldsymbol{Q}^{(n)}(t)\right)\cdot \left(\boldsymbol{A}^{(n)}(t)-\boldsymbol{S}^{(n)}(t)\right)\right]\notag\\
  & = b_{1} +b_{2}\notag \\
  &+ 2 E\left[\left(\boldsymbol{\gamma} \boldsymbol{Q}^{(n)}(t)\right)\right. \notag\\
   &\qquad \left. \cdot  \left(\boldsymbol{A}^{(n)}(t)-\boldsymbol{\lambda}^{\star} +\boldsymbol{\lambda}^{\star}-\boldsymbol{S}^{(n)}(t)\right)\right]\notag\\
  & = b_{1}+b_{2}- 2\epsilon  \|\left(\boldsymbol{\gamma} \boldsymbol{Q}^{(n)}(t)\right)_{\star}\|\notag\\
  & \quad+ 2 E\left[ \left(\boldsymbol{\gamma} \boldsymbol{Q}^{(n)}(t)\right)\cdot  \left(\boldsymbol{\lambda}^{\star}-\boldsymbol{S}^{(n)}(t)\right) \right]\notag\\
  & \leq b_{1}+b_{2} - 2\epsilon  \|\left(\boldsymbol{\gamma} \boldsymbol{Q}^{(n)}(t)\right)_{\star}\|\notag\\
  &\quad -\delta \|\left(\boldsymbol{\gamma} \boldsymbol{Q}^{(n)}(t)\right)_{\perp}\|,
  \end{align}
  where the last inequality follows from the definition of Max Weight (see ~(\ref{mwdef})) and the set $B^{\delta}_{\boldsymbol{\lambda}^{\star}}$ (see (\ref{deltaball})).
  If $\|\boldsymbol{\gamma} \boldsymbol{Q}^{(n)}(t)\| \ge x $, then at least one of $\|\left(\boldsymbol{\gamma} \boldsymbol{Q}^{(n)}(t)\right)_{\star}\|$ or $\|\left(\boldsymbol{\gamma} \boldsymbol{Q}^{(n)}(t)\right)_{\perp}\|$ is greater than or equal to $x/\sqrt{2}$. After some algebraic manipulations we obtain ($\gamma_{min}=\min_i \gamma_i$), 
  \begin{align*}
  \|\sqrt{\boldsymbol{\gamma}}\boldsymbol{Q}^{(n)}(t)\| > x  \implies \|\boldsymbol{\gamma} \boldsymbol{Q}^{(n)}(t)\| &>\sqrt{\gamma_{\min}} x \\
  \implies \|\left(\boldsymbol{\gamma} \boldsymbol{Q}^{(n)}(t)\right)_{\star}\| \vee \|\left(\boldsymbol{\gamma} \boldsymbol{Q}^{(n)}(t)\right)_{\perp} \| &\geq \frac{\sqrt{\gamma_{\min}} x}{\sqrt{2}}\\
  \implies \delta \|\left(\boldsymbol{\gamma} \boldsymbol{Q}^{(n)}(t)\right)_{\star}\| + \epsilon \|\left(\boldsymbol{\gamma} \boldsymbol{Q}^{(n)}(t)\right)_{\perp} \|& \geq \left(\epsilon \wedge \delta \right) \frac{\sqrt{\gamma_{\min}} x}{\sqrt{2}}.
  \end{align*}
  Substituting the above in inequality~(\ref{eq:7}) we see that the drift is upper bounded by 
$$
-\left(\epsilon \wedge \delta \right) \frac{\sqrt{\gamma_{\min}} x}{2\sqrt{2}} + \frac{b_{1}+ b_{2}}{\|\sqrt{\boldsymbol{\gamma}}\boldsymbol{Q}^{(n)}(t)\|}.
$$ 
This quantity is uniformly bounded by a negative constant for sufficiently large $x$. Application of Proposition~\ref{th:hajek} completes the proof.
 \end{proof}

  Now the positive recurrence of $\boldsymbol{Q}^{(n)}(\cdot)$ follows from 
Proposition~\ref{MTtheorem}. 
In fact, we will prove the following stronger statement. 
\begin{theorem}\label{theorem1}
For each $n=1,2,\ldots$, the Markov process $\boldsymbol{Q}^{(n)}(\cdot)$ is positive Harris recurrent and hence has a unique invariant probability distribution, which will be denoted $\chi^{(n)}$. 
Moreover, if $\boldsymbol{Q}^{(n)}(\infty)$ is the (random) process state in stationary 
regime (i.e. it has distribution $\chi^{(n)}$), 
\begin{align*}
E[ \| \bQ^{(n)}  (\infty) \|^{r}  ] <\infty , \forall r>0.
\end{align*}
\end{theorem}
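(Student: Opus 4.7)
The proof will be a direct assembly of the pieces already in place. Lemma~\ref{prop2} tells us that every compact subset of $\R_+^N$ is petite for $\bQ^{(n)}(\cdot)$, and Lemma~\ref{prop1} shows that $\bQ^{(n)}(\cdot)$ is bounded in probability (the Hajek tail estimate \eqn{hajek2} gives much more than needed). These two facts are exactly the hypotheses of Proposition~\ref{MTtheorem}(ii), so I would invoke that proposition to conclude positive Harris recurrence of $\bQ^{(n)}(\cdot)$. (The implicit $\psi$-irreducibility needed for the Meyn--Tweedie framework is supplied by Lemma~\ref{lemma2}, which actually produces genuine small sets on which the $l$-step transition kernel dominates a non-trivial sub-probability measure.) Existence and uniqueness of the invariant probability distribution $\chi^{(n)}$ then follow from the standard theory.

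For the moment bound, I would lift the tail estimate \eqn{hajek2} from fixed initial states to the stationary distribution. Fix any $\bx_0 \in \R_+^N$, set $b = L(\bx_0) := \|\sqrt{\bgamma}\bx_0\|$, and apply \eqn{hajek2} to every $t \ge 0$. Averaging in Ces\`aro fashion,
\[
\frac{1}{T}\sum_{t=0}^{T-1} P_{\bx_0}\bigl(L(\bQ^{(n)}(t)) \ge u\bigr) \le \frac{1}{T(1-\rho)} e^{\eta(b-u)} + \frac{D}{1-\rho} e^{\eta(\kappa-u)}.
\]
Since $\bQ^{(n)}(\cdot)$ is positive Harris recurrent, the ergodic theorem gives that the left-hand side converges, as $T\to\infty$, to $\chi^{(n)}\bigl(\{\bx : L(\bx) \ge u\}\bigr)$ for every $\bx_0$. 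Passing to the limit therefore yields the exponential tail bound
\[
P\bigl(\|\sqrt{\bgamma}\bQ^{(n)}(\infty)\| \ge u\bigr) \le \frac{D}{1-\rho}\, e^{\eta(\kappa-u)}, \qquad u \ge 0.
\]
Because $\sqrt{\bgamma} > \bzero$, the norms $\|\bQ^{(n)}(\infty)\|$ and $\|\sqrt{\bgamma}\bQ^{(n)}(\infty)\|$ are equivalent up to a constant factor, so this exponential tail immediately implies $E[\|\bQ^{(n)}(\infty)\|^r]<\infty$ for every $r>0$.

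The work is front-loaded into the preceding lemmas, so I do not expect a genuine obstacle here. The only step that requires mild care is the Ces\`aro passage to the limit: using it (rather than a direct $t\to\infty$ limit of \eqn{hajek2} integrated against $\chi^{(n)}$) avoids any aperiodicity verification and sidesteps the circularity of needing an a priori exponential moment of $\chi^{(n)}$ to dominate the $\rho^t e^{\eta b}$ term. With that detail handled, the exponential tail on $\chi^{(n)}$, and hence the finiteness of all polynomial moments, drops out cleanly.
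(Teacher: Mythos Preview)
Your proposal is correct and matches the paper's proof almost exactly: the paper also combines Lemma~\ref{prop2} and Lemma~\ref{prop1} via Proposition~\ref{MTtheorem}(ii) for positive Harris recurrence, and then passes the Hajek bound \eqn{hajek2} through a Ces\`aro average and the ergodic theorem to control the stationary distribution. The only cosmetic difference is that the paper applies the ergodic theorem to the truncated moment $T_b\|\cdot\|^r := \|\cdot\|^r \wedge b$ and then sends $b\to\infty$ by monotone convergence, whereas you apply it to the indicator $\mathbf{1}\{L(\cdot)\ge u\}$ to extract an explicit exponential tail first; both routes are equivalent.
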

\begin{proof}
By Lemma~\ref{prop2} any compact set is petite. Since $\boldsymbol{Q}^{(n)}(\cdot)$  is also bounded in probability (Lemma~\ref{prop1}), by Proposition~\ref{MTtheorem} $\boldsymbol{Q}^{(n)}(\cdot)$ is positive Harris recurrent. 

For a function $f(\cdot)$ and fixed $b>0$, denote $T_{b}f(\cdot)= f(\cdot) \wedge b$. 
Consider the process  starting from an arbitrary fixed initial state $\boldsymbol{Q}^{(n)}(0)$. Since the process is positive Harris recurrent, 
we can apply the ergodic theorem to obtain (note that $T_b \|\cdot\|$ is a bounded continuous function):
\begin{align}\label{mkeg1}
E\left(	T_{b}\|\boldsymbol{Q}^{(n)}(\infty)\|^{r}\right) = \lim_{m\to\infty}\frac{1}{m} \sum_{t=0}^{m}  E\left[T_{b} \|\boldsymbol{Q}^{(n)}(t) \|^{r}\right].
\end{align}
On the other hand,
\begin{align}\label{mkeg2}
\lim_{m\to\infty}\frac{1}{m} \sum_{t=0}^{m}  E\left[T_{b}\| \boldsymbol{Q}^{(n)}(t) \|^{r}\right] &\leq \lim_{m\to\infty}\frac{1}{m} \sum_{t=0}^{m} E\left[\| \boldsymbol{Q}^{(n)}(t)\|^{r}\right] \notag\\
&<C,
\end{align}
for some constant $C>0$,
where the second inequality follows from ~(\ref{hajek2}). 
Combining (\ref{mkeg1}) and~(\ref{mkeg2}), we have
\begin{align}\label{mkeg3}
E\left(	T_{b} \| \boldsymbol{Q}^{(n)}(\infty)  \|^{r}				\right) \leq C, \quad  \forall b>0,
\end{align}
and therefore, by monotone convergence theorem,
\begin{align*}
E\left( \| \boldsymbol{Q}^{(n)}(\infty)  \|^{r}				\right) = \lim_{b\to\infty} E\left(	T_{b} \| \boldsymbol{Q}^{(n)}(\infty)  \|^{r}				\right) \le C.
\end{align*}
\end{proof}

\begin{lemma}
\label{lem-chi-abs-cont}
Uniformly on all (large) $n$ and the distributions of $\bQ^{(n)}(0)$, the distribution of $\bQ^{(n)}(1)$ is absolutely continuous w.r.t. Lebesgue measure, with the 
density upper bounded by $\delta^*$. 
\end{lemma}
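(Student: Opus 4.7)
The plan is to observe that the one-step transition is essentially ``initial state minus discrete service vector, plus continuous arrival,'' so after one step the randomness comes from (i) the initial distribution, (ii) a finite number of discrete service outcomes, and (iii) the continuous arrival $\bA^{(n)}(0)$. Conditioning on (i) and (ii) leaves only a translate of the arrival density, and a mixture of densities pointwise bounded by $\delta^*$ is itself bounded by $\delta^*$.

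More concretely, first I would fix $\bQ^{(n)}(0) = \bx$. Under MaxWeight, the decision $k(\bx) \in \argmax_\ell (\bgamma \bx)\cdot \bmu^\ell$ is uniquely determined outside a Lebesgue-null set by Lemma~\ref{lemma1}(i); on that null set any measurable tie-breaking rule picks some decision, which does not affect any integrals below. Given $k(\bx)$, the service vector equals $\bv^{k(\bx),j}$ with probability $p^{k(\bx),j}$, so
\[
\bQ^{(n)}(1) \;=\; \bigl(\bx - \bv^{k(\bx),j}\bigr)^+ \,+\, \bA^{(n)}(0).
\]
Conditional on $\bx$ and $j$, this is a deterministic shift of $\bA^{(n)}(0)$, so its distribution is absolutely continuous with density $\by \mapsto f^{(n)}\!\bigl(\by - (\bx-\bv^{k(\bx),j})^+\bigr) \le \delta^*$ pointwise.

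Next I would uncondition. Letting $\mu_0^{(n)}$ denote the distribution of $\bQ^{(n)}(0)$, the distribution of $\bQ^{(n)}(1)$ is the mixture
\[
\mathrm{Law}\bigl(\bQ^{(n)}(1)\bigr)(d\by) \;=\; \int \sum_{j=1}^{\mathcal{O}_{k(\bx)}} p^{k(\bx),j}\, f^{(n)}\!\bigl(\by - (\bx-\bv^{k(\bx),j})^+\bigr)\,d\by\, \mu_0^{(n)}(d\bx),
\]
and interchanging the integral with Lebesgue measure (Fubini) shows this is absolutely continuous with density
\[
p^{(n)}(\by) \;=\; \int \sum_{j=1}^{\mathcal{O}_{k(\bx)}} p^{k(\bx),j}\, f^{(n)}\!\bigl(\by - (\bx-\bv^{k(\bx),j})^+\bigr)\, \mu_0^{(n)}(d\bx) \;\le\; \delta^*,
\]
since each integrand is pointwise at most $\delta^*$ and the weights $p^{k(\bx),j}$ sum to one and $\mu_0^{(n)}$ is a probability measure.

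There is essentially no ``hard step'' — the only thing to be careful about is the measurability of $\bx \mapsto k(\bx)$ (which is routine since the argmax is a finite selection) and the fact that the Lebesgue-null set of MaxWeight ties carries no mass in the above integral regardless of how ties are broken. Uniformity in $n$ is automatic because the bound $\delta^*$ in Section~\ref{systemmodel} holds for every $f^{(n)}$ in the heavy-traffic sequence, and uniformity over initial distributions is automatic because the bound on $p^{(n)}(\by)$ did not use any property of $\mu_0^{(n)}$ beyond being a probability measure.
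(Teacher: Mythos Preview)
Your argument is correct and is exactly the ``straightforward'' computation the paper has in mind; indeed, the paper omits this proof entirely, simply noting that it follows from the assumptions on the distribution of $\bA^{(n)}(t)$. Your conditioning-then-mixing argument, with the observation that $(\bx-\bv^{k(\bx),j})^+$ is a fixed shift once $\bx$ and $j$ are given, is precisely the intended route.
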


We omit the proof, which is straightforward, given our assumptions on the distribution of $\bA^{(n)}(t)$.

\begin{lemma}
\label{proposition3}
As $n\to\infty$, $\|\bQ^{(n)}(\infty)\| \to \infty$ in probability.
 \end{lemma}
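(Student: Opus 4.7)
The plan is to establish the stronger statement that $\chi^{(n)}(D) \to 0$ for every compact set $D\subset\R_+^N$, which is equivalent to $\|\bQ^{(n)}(\infty)\|\to\infty$ in probability. I proceed in three stages: first, bound the steady-state wasted service; second, conclude that $\chi^{(n)}$ puts vanishing mass on a small box near the origin, because in that box every MaxWeight action forces positive wastage with positive probability; third, extend the vanishing-mass conclusion to any compact set by the reachability argument from the proof of Lemma~\ref{prop2}.

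For the first step, apply the stationary identity $E[\bnu\cdot\bQ^{(n)}(1)] = E[\bnu\cdot\bQ^{(n)}(0)]$ -- legitimate because $E\|\bQ^{(n)}(\infty)\|<\infty$ by Theorem~\ref{theorem1} -- to the recursion~(\ref{eq:1}). This yields
\begin{align*}
\bnu\cdot\blambda^{(n)} - \bnu\cdot E\bS^{(n)}(\infty) + \bnu\cdot E\bU^{(n)}(\infty) = 0.
\end{align*}
Since $E\bS^{(n)}(\infty)$ is a convex combination of the $\bmu^k$'s and therefore lies in $\bV$, while $\blambda^\star$ maximizes $\bnu\cdot\bx$ over $\bV$, we obtain $\bnu\cdot E\bU^{(n)}(\infty)\le\bnu\cdot(\blambda^\star-\blambda^{(n)})\to 0$. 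The CRP assumption $\bnu>\bzero$ together with $\bU^{(n)}\ge\bzero$ then forces $EU_i^{(n)}(\infty)\to 0$ for every flow $i$.

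For the second step, let $v^\star=\min\{v^{k,j}_i:k,i,j\text{ with }v^{k,j}_i>0\}>0$ and $p^\star=\min\{p^{k,j}:k,j\text{ with }p^{k,j}>0\}>0$, and set $R=[\bzero,(v^\star/2)\bone]$. Since $\bmu^k\neq\bzero$ for every $k$, for each decision $k$ there exist indices $j(k),i(k)$ with $v^{k,j(k)}_{i(k)}\ge v^\star$. Whenever $\bQ^{(n)}(t)\in R$, the MaxWeight-chosen $k$ produces outcome $j(k)$ with probability at least $p^\star$, and then $U_{i(k)}^{(n)}(t)\ge v^\star/2$. Integrating against $\chi^{(n)}$ gives
\begin{align*}
\sum_i EU_i^{(n)}(\infty)\;\ge\; p^\star\,\frac{v^\star}{2}\,\chi^{(n)}(R),
\end{align*}
and the left-hand side vanishes by the first step, so $\chi^{(n)}(R)\to 0$.

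For the third step, show that for every compact $D$ there exist constants $\tau=\tau(D)$ and $\alpha=\alpha(D)>0$, \emph{independent of $n$}, such that $P(\bQ^{(n)}(\tau)\in R \mid \bQ^{(n)}(0)=\bx)\ge\alpha$ for all $\bx\in D$. Stationarity then gives $\chi^{(n)}(R)\ge\alpha\,\chi^{(n)}(D)$, so the second step yields $\chi^{(n)}(D)\to 0$, completing the proof. The main obstacle is precisely this uniformity in $n$: it requires revisiting the construction behind inequalities~(\ref{eq-prob-down}) and~(\ref{eq-prob-up}) in the proof of Lemma~\ref{prop2} and checking that every probability estimate there is controlled only by the uniform density bound $\delta_*$ on the arrivals and the $n$-independent service parameters, neither of which depends on the particular density $f^{(n)}$.
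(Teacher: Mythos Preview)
Your proof is correct and follows essentially the same route as the paper's: both hinge on (i) the reachability estimate from Lemma~\ref{prop2} showing that any compact set leads to a small neighborhood of the origin in bounded time with probability bounded away from zero uniformly in $n$, (ii) the observation that near the origin wasted service is positive with positive probability, and (iii) the fact that steady-state wasted service must vanish as $n\to\infty$. The paper packages this as a proof by contradiction and leaves step~(iii) implicit (``contradicts stability''), whereas you argue directly and spell out the balance equation $\bnu\cdot E\bU^{(n)}(\infty)\le\bnu\cdot(\blambda^{\star}-\blambda^{(n)})\to 0$; your explicit attention to the uniformity in $n$ of the reachability constants is also something the paper uses but does not emphasize.
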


 \begin{proof}
The proof is by contradiction. Suppose, for some fixed $C>0$ the compact set  
 $D = \{\boldsymbol{x}\in\mathbb{R}^{N} :\|\boldsymbol{x}\|\leq C \}$ is such that
 \begin{align}\label{proposition3:1}
\limsup\limits_{n\to\infty}\chi^{(n)}( D ) =\beta>0.
\end{align} 

Suppose $\bQ^{(n)}(t)\in D$. Then, using the same argument as in the proof of Lemma~\ref{prop2}, it is easy to see that for any $\epsilon>0$ there exists time $\tau \ge 1$,
such that $P\{\|\bQ^{(n)}(t+\tau)\| \le \epsilon\} \ge \beta_1>0$. This in turn implies that, with probability at least some $\beta_2>0$, for at least one flow $i$ the amount 
of wasted service $U_i^{(n)}(t+\tau) \ge \epsilon_2>0$. This implies that, for at least one $i$,
$$
\limsup\limits_{n\to\infty} E[U^{(n)}_{i}(\infty)] \geq \beta_1 \beta_2 \epsilon_2 >0.
$$
This, however, contradicts the fact that the process is stable for all large $n$.
\end{proof}

 \section{Steady-state queue lengths\\ deviations from $\bnu$}
\label{processYn}

Let us consider the process $\boldsymbol{Y}^{(n)}(\cdot)$,
defined as
$$
\boldsymbol{Y}^{(n)}(t) := (\bgamma \boldsymbol{Q}^{(n)}(t))_{\perp}.
$$

\begin{lemma}
\label{lemma3}
The steady-state expected norm $E\|\boldsymbol{Y}^{(n)}(\infty)\|$ is uniformly bounded in $n$.
\end{lemma}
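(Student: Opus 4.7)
The plan is to leverage the already-computed drift identity from the proof of Lemma~\ref{prop1}, but apply it with the Lyapunov function $L(\bx) = \|\sqrt{\bgamma}\bx\|^{2}$ (rather than $\|\sqrt{\bgamma}\bx\|$) and evaluate it in steady state. The crucial point is that the drift inequality derived in~(\ref{eq:6}) and the subsequent bound in~(\ref{eq:7}) contain \emph{two} non-positive terms: one proportional to $\epsilon^{(n)} \|(\bgamma\bQ^{(n)})_{\star}\|$ (which disappears as $n\to\infty$ since $\blambda^{(n)} \to \blambda^{\star}$) and one proportional to $\delta \|(\bgamma\bQ^{(n)})_{\perp}\| = \delta \|\bY^{(n)}\|$, where $\delta$ is \emph{independent of $n$} (it comes from the geometry of $B^{\delta}_{\blambda^{\star}} \subset \bV^{\star}$ via~(\ref{deltaball})). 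Dropping the first non-positive term and taking the expectation in stationarity yields the desired bound.

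More concretely, I would first recall from the calculation leading to~(\ref{eq:6}) that, pointwise in $\bQ^{(n)}(t)$,
\begin{align*}
E\!\left[L(\bQ^{(n)}(t+1))-L(\bQ^{(n)}(t)) \mid \bQ^{(n)}(t)\right]
&\leq b_{1} + b_{2} \\
&\quad + 2(\bgamma \bQ^{(n)}(t))\cdot(\blambda^{(n)} - E[\bS^{(n)}(t)\mid \bQ^{(n)}(t)]),
\end{align*}
where $b_{1},b_{2}$ are constants uniform in $n$ (the uniformity comes from the fact that $\bS^{max}$, $\bA^{max}$, and the support of the service distribution do not depend on $n$, and that $U_{i}^{(n)}(t)>0$ forces $Q_{i}^{(n)}(t)$ to be bounded by $\bS^{max}$). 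Writing $\blambda^{(n)} = \blambda^{\star} + (\blambda^{(n)}-\blambda^{\star})$ and using the MaxWeight inequality $(\bgamma\bQ^{(n)}(t))\cdot \by \leq (\bgamma\bQ^{(n)}(t))\cdot E[\bS^{(n)}(t)\mid \bQ^{(n)}(t)]$ for every $\by \in \bV$, applied to $\by = \blambda^{\star} + \delta (\bgamma\bQ^{(n)}(t))_{\perp}/\|(\bgamma\bQ^{(n)}(t))_{\perp}\| \in B^{\delta}_{\blambda^{\star}} \subset \bV^{\star}$, gives
\begin{align*}
2(\bgamma\bQ^{(n)}(t))\cdot(\blambda^{\star} - E[\bS^{(n)}(t)\mid \bQ^{(n)}(t)]) \leq -2\delta\|\bY^{(n)}(t)\|.
\end{align*}
The contribution of $(\blambda^{(n)}-\blambda^{\star})$ is non-positive (it equals a non-positive multiple of $\|(\bgamma\bQ^{(n)}(t))_{\star}\|$), so it can simply be dropped. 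Hence
\begin{align*}
E[L(\bQ^{(n)}(t+1))-L(\bQ^{(n)}(t)) \mid \bQ^{(n)}(t)] \leq b_{1}+b_{2} - 2\delta \|\bY^{(n)}(t)\|.
\end{align*}

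Finally, I take $\bQ^{(n)}(t)$ distributed according to the stationary distribution $\chi^{(n)}$. Theorem~\ref{theorem1} guarantees $E\|\bQ^{(n)}(\infty)\|^{2} < \infty$, so $E[L(\bQ^{(n)}(\infty))] < \infty$ and stationarity gives $E[L(\bQ^{(n)}(t+1)) - L(\bQ^{(n)}(t))] = 0$. Taking expectations in the displayed drift bound then yields $2\delta\, E\|\bY^{(n)}(\infty)\| \leq b_{1}+b_{2}$, proving the claim with a constant independent of $n$. The main obstacle I anticipate is the careful verification that the constants $b_{1}$ and $b_{2}$ are genuinely uniform in $n$ (in particular the bound on $\|(\bgamma\bQ^{(n)}(t))\cdot\bU^{(n)}(t)\|$, which relies on the fact that wasted service occurs only when coordinates of $\bQ^{(n)}(t)$ are small) and that the above application of MaxWeight optimality produces the $-\delta\|\bY^{(n)}\|$ term cleanly—everything else is a routine taking-expectations argument justified by the moment bound from Theorem~\ref{theorem1}.
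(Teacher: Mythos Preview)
Your proposal is correct and follows essentially the same approach as the paper: use the quadratic Lyapunov function $L(\bQ)=\sum_i \gamma_i Q_i^2$, derive the drift bound containing the two non-positive terms $-2\epsilon\|(\bgamma\bQ^{(n)})_\star\|$ and $-2\delta\|(\bgamma\bQ^{(n)})_\perp\|$ via the MaxWeight optimality and the geometry of $B^\delta_{\blambda^\star}$, and then take expectations in steady state (justified by the second-moment bound of Theorem~\ref{theorem1}) to obtain $2\delta\,E\|\bY^{(n)}(\infty)\|\le b_1$. The only cosmetic difference is that the paper retains both non-positive terms in the final inequality, whereas you discard the $\epsilon$-term; the conclusion is identical.
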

\begin{proof}
As we did in the proof of Lemma~\ref{prop1}, to simplify exposition, assume that
$\boldsymbol{\lambda}^{(n)} -\boldsymbol{\lambda}^{\star} = -\epsilon  {\boldsymbol{\nu}}$. 
(If not, in this proof we would consider the projection $\blambda^{\star\star}$ of $\boldsymbol{\lambda}^{(n)}$ on $\bV^\star$, instead of $\blambda^\star$.
Consider Lyapunov function  $L(\boldsymbol{Q}) = \sum_{i=1}^{N}\gamma_{i}Q_{i}^{2}$.
By Theorem~\ref{theorem1}, $E L(\boldsymbol{Q}^{(n)}(\infty)) < \infty$.
The conditional drift of $L(\boldsymbol{Q})$ in one time step is given by (let $\boldsymbol{Q}^{(n)}(t)=\boldsymbol{Q}^{(n)}$,
$\boldsymbol{A}^{(n)}(t)=\boldsymbol{A}^{(n)}$, and so on,
to simplify notation)
 \begin{align}\label{driftineq}
 & E\left[L(\boldsymbol{Q}^{(n)}(t+1))  - L(\boldsymbol{Q}^{(n)}(t))|\boldsymbol{Q}^{(n)} \right] \notag \\
 & = E\left[\sum_{i=1}^{N}\gamma_{i}\left(Q^{(n)}_{i}+ A^{(n)}_{i}-S^{(n)}_{i} + U^{(n)}_{i}\right)^{2}|\boldsymbol{Q}^{(n)}\right] \notag\\
 &\qquad - \sum_{i=1}^{N}\gamma_{i}\left(Q^{(n)}_{i}\right)^{2}\notag\\
 &=E\left[\sum_{i=1}^{N}\gamma_{i}\left(A^{(n)}_{i}-S^{(n)}_{i} + U^{(n)}_{i}\right)\left(2Q^{(n)}_{i} \right.\right.\notag\\
&\quad \quad \quad \quad\quad \quad \quad \quad \quad \quad \quad \quad\left.\left. + A^{(n)}_{i}-S^{(n)}_{i} + U^{(n)}_{i}\right)  |\boldsymbol{Q}^{(n)}\right] \notag \\
 &=E\left[\sum_{i=1}^{N}\gamma_{i}\left(A^{(n)}_{i}-S^{(n)}_{i} + U^{(n)}_{i}\right)^{2}\right. \notag\\
&\qquad \qquad \qquad \left. + 2\gamma_{i} Q^{(n)}_{i} \left(A^{(n)}_{i}-S^{(n)}_{i} + U^{(n)}_{i}\right)|\boldsymbol{Q}^{(n)}\right] \notag \\
 &\leq b_{1} + 2 \left(\boldsymbol{\gamma} \boldsymbol{Q}^{(n)}\right)\cdot\left(\boldsymbol{\lambda}^{(n)} -E\left(\boldsymbol{S}^{(n)} |\boldsymbol{Q}^{(n)}\right) \right) \notag \\
 &= b_{1} + 2 \left(\boldsymbol{\gamma} \boldsymbol{Q}^{(n)}\right)\cdot\left(\boldsymbol{\lambda}^{(n)}-\boldsymbol{\lambda}^{\star}+\boldsymbol{\lambda}^{\star}-E\left(\boldsymbol{S}^{(n)} |\boldsymbol{Q}^{(n)} \right)\right) \notag \\
 &= b_{1} - 2\epsilon  \|\left(\boldsymbol{\gamma} \boldsymbol{Q}^{(n)}\right)_{\star}\|\notag\\
 &\qquad +2\left(\boldsymbol{\gamma} \boldsymbol{Q}^{(n)}\right)\cdot \left(\boldsymbol{\lambda}^{\star}-E\left(\boldsymbol{S}^{(n)}|\boldsymbol{Q}^{(n)}\right) \right) \notag \\
 &\leq b_{1} - 2\epsilon  \|\left(\boldsymbol{\gamma} \boldsymbol{Q}^{(n)}\right)_{\star}\|+2\min_{\boldsymbol{y}\in B^{\delta}_{\boldsymbol{\nu}} }\left(\boldsymbol{\gamma} \boldsymbol{Q}^{(n)}\right)\cdot \left(\boldsymbol{\lambda}^{\star}-\boldsymbol{y} \right) \notag \\
 &\leq b_{1} - 2\epsilon  \|\left(\boldsymbol{\gamma} \boldsymbol{Q}^{(n)}\right)_{\star}\|-2\delta \|\left(\boldsymbol{\gamma Q}\right)_{\perp}\|,
 \end{align}
where $b_{1}$ depends only on $\bgamma, \bA^{max}, \bS^{max}$, and the last inequality follows from the definition of MaxWeight and $B^{\delta}_{\boldsymbol{\lambda^{\star}}}$. 
Now consider the process $\boldsymbol{Q}^{(n)}(\cdot)$ in stationary regime, and take the expectation of both parts of~(\ref{driftineq}). We obtain, 
\begin{align}
 2\delta E\left[\|\left(\boldsymbol{\gamma Q}^{(n)}(\infty)\right)_{\perp}\|\right] + 2\epsilon  E\left[\|\left(\boldsymbol{\gamma Q}^{(n)}(\infty)\right)_{\star}\|\right] \leq b_{1}.
 \end{align}
Recalling that $\left(\boldsymbol{\gamma Q}^{(n)}(\infty)\right)_{\perp}=\boldsymbol{Y}^{(n)}(\infty)$,
we see that\\ $E\|\boldsymbol{Y}^{(n)}(\infty)\|$ is uniformly bounded.
\end{proof}

\section{Limit of the queue-differential process}
\label{sec-Ystar}

We now define a Markov chain $\boldsymbol{Y}^{\star}(\cdot)$, which, in the sense that will be made precise later, 
is a limit of the (non-Markov) process
$\boldsymbol{Y}^{(n)}(\cdot)$ as $n\to\infty$. 

Define $\boldsymbol{Y}^{(n)}(t)$ 
as the orthogonal projection of $\bgamma \boldsymbol{Q}^{(n)}(t)$ on
the subspace $\bnu_{\perp}$. 
We call $\boldsymbol{Y}^{(n)}(\cdot)$ a {\em queue-differential}
process. (Obviously, {\em under the CRP condition}, the queue-differential process
is equal to the ``queue deviation'' process 
$\boldsymbol{Y}^{(n)}(\cdot)= (\bgamma \boldsymbol{Q}^{(n)}(t))_{\perp}$ in Section~\ref{processYn}.
When CRP does not hold, the ``deviation'' and ``differential'' processes
are defined differently. This will be discussed in Section~\ref{sec-no-crp}.) 
Denote by $\boldsymbol{Y}^{(n)}(\infty)$ the corresponding projection of 
the steady-state $\boldsymbol{Q}^{(n)}(\infty)$, and by $\Gamma^{(n)}$ its distribution.

Markov chain $\boldsymbol{Y}^{\star}(\cdot)$ is defined formally 
as follows. (We will show below that, in fact, the distribution $\Gamma^{(n)}$ converges to the stationary distribution
$\Gamma^{\star}$ of $\boldsymbol{Y}^{\star}(\cdot)$.)
   The state space of  $\boldsymbol{Y}^{\star}(\cdot)$ is  $\bnu_{\perp}$. 
Assume that at time $t$ the "scheduler" chooses decision 
\begin{align}
\label{modmw}
 k\in \argmax_{l:\boldsymbol{\mu}^{l}\in \boldsymbol{V}^{\star} } (\boldsymbol{Y^{\star}}(t)) \cdot \boldsymbol{\mu}^{l},
\end{align}
which determines the corresponding random amount of service $\bS(t)$, provided to the "queues" given by vector $\bQ^\star(t) = 
\bY^{\star}(t) / \bgamma$.
After that the (random) amount  $\bA^\star (t)$ of new "work" arrives and is added to the "queues."
Finally, the new queue lengths vector $\bQ^\star (t) - \bS(t) + \bA^\star (t)$ is transformed into $\bY^{\star}(t+1)$ via componentwise multiplication
by $\bgamma$ and orthogonal projection on $\bnu_{\perp}$.
(Note that both $\bQ^\star (t)$ and $ \bY^{\star}(t)$ may have components of any sign. Also, there is no "wasted service" here.)
In summary, the one step evolution is described by
\beql{eq777}
\boldsymbol{Y^{\star}}(t+1) = \boldsymbol{Y}^{\star}(t)
+ \left(\boldsymbol{\gamma} \boldsymbol{A}^{\star}(t) 
- \boldsymbol{\gamma}\boldsymbol{S}(t)\right)_{\perp}.
\end{equation}
Informally, one can interpret the process $\bY^{\star}(\cdot)$ as the 
queue-differential
process $\bY^{(n)}(\cdot)$, when $n$ is very large and the queue length vector
$\bQ^{(n)}$ is both large and has a small angle with $\bnu$. Under these
conditions, {\em the only service decisions $k$ that can be chosen are such that 
$\boldsymbol{\mu}^{k} \in \boldsymbol{V}^{\star}$, and the choice is uniquely determined 
by $\bY^{(n)}(\cdot)$.}

Let $\tilde{P}(\boldsymbol{x},\cdot)$ denote the one-step transition function for the Markov process $\boldsymbol{Y}^{\star}(\cdot)$.
If $\boldsymbol{x}\in \boldsymbol{\nu}_{\perp}$, then let $\tilde{B}_{\epsilon}(\boldsymbol{x})	:= \{ \boldsymbol{y}\in \boldsymbol{\nu}_{\perp}: ||\boldsymbol{y}-\boldsymbol{x}||\leq \epsilon\}$. The following fact is analogous to Lemma~\ref{lemma1}.

\begin{lemma}\label{yopdf}
(i) The points $\by \in \bnu_{\perp}$, such that 
\beql{eq-y-sched}
 k\in \argmax_{l:\boldsymbol{\mu}^{l}\in \boldsymbol{V}^{\star} } \by \cdot \boldsymbol{\mu}^{l}
\end{equation}
 is non-unique, form a set of zero Lebesgue measure.
Moreover, if $\by$ is such that  the corresponding decision $k$  is unique, then for a sufficiently small $\epsilon>0$ the decision $k$
is also the unique element of 
$$
 \argmax_{l:\boldsymbol{\mu}^{l}\in \boldsymbol{V}^{\star} } \bz \cdot \boldsymbol{\mu}^{l}
$$
for all $\bz \in \tilde B_{\epsilon}(\by)$.\\ 
(ii) There exist small $\epsilon>0$ and constant $c_*>0$, $c^*>0$ such that $\tilde{P}(\boldsymbol{x},\cdot)$ is absolutely
continuous and, moreover,
uniformly in $\bx \in \bnu_{\perp}$, the density of $\tilde{P}(\boldsymbol{x},\cdot)$ is
lower bounded by $c_*$ on set $\tilde{B}_{\epsilon}(\boldsymbol{x})$ and is upper bounded by $c^*$ everywhere.
\end{lemma}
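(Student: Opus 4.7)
My approach mirrors Lemma~\ref{lemma1}(i). I plan to decompose the non-uniqueness set as the finite union $\bigcup_{k \neq l:\, \bmu^k, \bmu^l \in \bV^\star} H_{k,l}$, where $H_{k,l} = \{\by \in \bnu_\perp : \by \cdot (\bmu^k - \bmu^l) = 0\}$. The crucial observation I will invoke is that, since both $\bmu^k, \bmu^l \in \bV^\star$, they share the same inner product with $\bnu$, so $\bmu^k - \bmu^l$ itself lies in $\bnu_\perp$; by the standing assumption that the $\bmu^k$ are distinct, this difference is nonzero. Hence each $H_{k,l}$ is a codimension-one affine hyperplane in $\bnu_\perp$, of zero Lebesgue measure. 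The local-stability claim will then follow from continuity: at a strict argmax $\by$, the positive gap $\min_{l \neq k} \by \cdot (\bmu^k - \bmu^l) > 0$ persists on a sufficiently small ball.

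\textbf{Part (ii): absolute continuity and upper bound.} By \eqref{eq777}, given $\bY^\star(t) = \bx$ and the service outcome $\boldsymbol{v}^{k,j}$ for the (a.s.\ unique) decision $k = k(\bx)$, the next state equals $\bx - (\bgamma\boldsymbol{v}^{k,j})_\perp + (\bgamma \bA^\star(t))_\perp$. Thus $\tilde{P}(\bx, \cdot)$ is a finite mixture (over $j$, with weights $p^{k,j}$) of translates of the single distribution of $(\bgamma \bA^\star(t))_\perp$, and inherits absolute continuity from $\bA^\star(t)$. By the change of variables $\boldsymbol{a} \mapsto \bgamma \boldsymbol{a}$, the density of $\bgamma \bA^\star(t)$ is at most $\delta^*/\prod_i \gamma_i$ on the rectangle $[\bzero, \bgamma \bA^{\max}]$; its projection onto $\bnu_\perp$ then has density $g(\bw) = \int f_{\bgamma \bA^\star}(\bw + s\bnu)\,ds$, uniformly bounded by $(\delta^*/\prod_i \gamma_i)\,D$, with $D$ the maximum length of a fiber $\{\bw + s\bnu\}$ meeting the rectangle. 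This yields the uniform upper bound $c^*$.

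\textbf{The lower bound (the main obstacle).} This is where the real work lies; I plan to lower-bound a single mixture term $p^{k,j} g_j(\bz)$, where $g_j(\bz) = g(\bz + (\bgamma\boldsymbol{v}^{k,j})_\perp)$ is the density of $(\bgamma(\bA^\star - \boldsymbol{v}^{k,j}))_\perp$. The key input I will use is that $\boldsymbol{v}^{k,j} \leq \bS^{\max} < \bA^{\max}$ componentwise, with a \emph{uniform} gap $\bA^{\max} - \bS^{\max} > \bzero$ independent of $(k,j)$; together with $\bnu > \bzero$ (CRP), this forces the fiber through $\bgamma \boldsymbol{v}^{k,j}$ in direction $\bnu$ to stay inside $[\bzero, \bgamma \bA^{\max}]$ for a uniformly positive $\bnu$-interval of length at least some $\ell_* > 0$. (Note that $\boldsymbol{v}^{k,j}$ may have zero components, so one cannot move in the $-\bnu$ direction, but the $+\bnu$ move alone gives the required length.) Combined with $f^\star \geq \delta_*$ on its support, this gives $g_j(\bzero) = g((\bgamma\boldsymbol{v}^{k,j})_\perp) \geq (\delta_*/\prod_i \gamma_i)\,\ell_*$ uniformly in $(k,j)$. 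Since the fiber-length function $\bw \mapsto \mathrm{length}(\{s: \bw + s\bnu \in [\bzero,\bgamma\bA^{\max}]\})$ is continuous on $\bnu_\perp$, this lower bound will persist on some ball $\tilde B_\epsilon((\bgamma\boldsymbol{v}^{k,j})_\perp)$; finiteness of $(k,j)$ then allows a single $\epsilon > 0$ to work uniformly. Multiplying by $\min_{k,j} p^{k,j} > 0$ and translating by $\bx$ produces the uniform lower bound $c_* > 0$ on $\tilde B_\epsilon(\bx)$.
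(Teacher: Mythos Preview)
Your proposal is correct and follows essentially the same route the paper sketches: the paper's own proof simply says that (i) is obvious and that (ii) ``follows from our assumptions on the distribution of $\bA^{\star}(t)$, the fact that $\bA^{max} > \bS^{max}$, and the one-step evolution rule \eqn{eq777},'' omitting all details. Your write-up supplies exactly those details---the hyperplane decomposition for (i), and for (ii) the mixture-of-translates representation, the fiber-integration bound for the projected density, and the uniform positive fiber length coming from $\bA^{\max}-\bS^{\max}>\bzero$ together with $\bnu>\bzero$---so there is nothing to correct and no genuine divergence in approach.
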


\begin{proof}
Statement (i) is obvious. 
 Statement (ii) follows from our assumptions on the distribution of $\bA^{\star}(t)$, 
the fact that $\bA^{max} > \bS^{max}$, and the one-step
evolution rule \eqn{eq777}. We omit details.
\end{proof}

\begin{lemma}\label{yosmall}
For the Markov chain $\bY^{\star}(\cdot)$, every compact set is petite.
\end{lemma}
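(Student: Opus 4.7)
The plan is to mimic the two-step pattern used for $\bQ^{(n)}(\cdot)$ in Lemmas~\ref{lemma2} and \ref{prop2}, but the argument is in fact simpler because $\bY^{\star}(\cdot)$ lives on the whole Euclidean subspace $\bnu_\perp$ and its transition density is uniformly bounded below on a ball centered at the current state (Lemma~\ref{yopdf}(ii)). Concretely, I will first show that every sufficiently small ball $\tilde B_{\epsilon/2}(\bx_0)\subset \bnu_\perp$ is a small set; then I will argue that from any point of a compact set $G$ the chain reaches a single fixed small ball in a bounded number of steps with uniformly positive probability; and finally I will invoke Proposition~\ref{MTtheorem}(i).

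For the small-set step, let $\epsilon, c_*>0$ be as in Lemma~\ref{yopdf}(ii) and fix $\bx_0\in\bnu_\perp$. For any $\by\in \tilde B_{\epsilon/2}(\bx_0)$ the inclusion $\tilde B_{\epsilon/2}(\bx_0)\subset \tilde B_{\epsilon}(\by)$ holds, so the density of $\tilde P(\by,\cdot)$ is at least $c_*$ throughout $\tilde B_{\epsilon/2}(\bx_0)$. Hence $\tilde P(\by,A)\ge c_*\,\sL(A\cap \tilde B_{\epsilon/2}(\bx_0))$ for every Borel $A$, which verifies the small-set property with $l=1$ and sub-probability measure $\nu(\cdot)=c_*\,\sL(\cdot\cap \tilde B_{\epsilon/2}(\bx_0))$.

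For the reachability step, fix the reference small ball $A:=\tilde B_{\epsilon/2}(\bzero)$ and let $G\subset \tilde B_M(\bzero)$ for some $M>0$. For each $\bx\in G$, I would construct a sequence of waypoints $\bz_0=\bx,\bz_1,\ldots,\bz_\tau=\bzero$ in $\bnu_\perp$ with $\|\bz_{i+1}-\bz_i\|\le \epsilon/2$ and with the common length $\tau:=\lceil 2M/\epsilon\rceil+1$ (if the nominal length is smaller, one simply sets all tail waypoints equal to $\bzero$). If $\bY^{\star}(i)\in \tilde B_{\epsilon/4}(\bz_i)$, then $\tilde B_{\epsilon/4}(\bz_{i+1})\subset \tilde B_{\epsilon}(\bY^{\star}(i))$, so by Lemma~\ref{yopdf}(ii)
\begin{equation*}
P\bigl(\bY^{\star}(i+1)\in \tilde B_{\epsilon/4}(\bz_{i+1})\,\big|\,\bY^{\star}(i)\bigr)\;\ge\; c_*\,V,
\end{equation*}
where $V$ is the $(N-1)$-dimensional volume of a ball of radius $\epsilon/4$. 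Iterating over $i=0,\ldots,\tau-1$ and noting $\tilde B_{\epsilon/4}(\bz_\tau)\subset A$ yields $\tilde P^{\tau}(\bx,A)\ge (c_*V)^{\tau}=:\alpha>0$ uniformly in $\bx\in G$.

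Taking the probability distribution $\ba$ on $\{1,2,\ldots\}$ concentrated at $\tau$, we get $K_{\ba}(\bx,A)=\tilde P^{\tau}(\bx,A)\ge \alpha$ for all $\bx\in G$, with $A$ small, so Proposition~\ref{MTtheorem}(i) gives that $G$ is petite. The only delicate point is ensuring a single $\tau$ works uniformly over $G$, which is handled by the padding device (stay close to $\bzero$ after arriving there); this is legitimate because the density lower bound in Lemma~\ref{yopdf}(ii) holds at every state, so successive transitions inside $A$ also contribute a uniform factor to the lower bound.
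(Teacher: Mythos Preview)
Your proof is correct and follows the approach the paper indicates: the paper's own proof simply states that the result follows from Lemma~\ref{yopdf} by an argument analogous to that of Lemma~\ref{prop2}, omitting all details. Your filling-in is exactly in that spirit---identify a small ball via the uniform density lower bound, show uniform reachability from any compact set by chaining through waypoints, then apply Proposition~\ref{MTtheorem}(i)---and your observation that the direct chaining (without the ``go to the origin then come back up'' detour of Lemma~\ref{prop2}) suffices here, because $\bnu_\perp$ has no boundary, is a legitimate and natural simplification.
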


The proof easily follows from Lemma~\ref{yopdf}, by using the argument analogous to that in the proof of Lemma~\ref{prop2}.  We omit details.


Next, we establish some properties of a stationary distribution 
$\Gamma^{\star}$
of the  Markov process $\boldsymbol{Y}^{\star}(\cdot)$, {\em assuming a stationary distribution exists}.
This will help us later prove that the stationary distribution in fact exists and is unique.

\begin{lemma}\label{yocont}
If $\Gamma^{\star}$ is a  stationary distribution of $\boldsymbol{Y}^{\star}(\cdot)$, then  $\Gamma^{\star}$ is equivalent to the 
Lebesgue measure $\tilde{\mathcal{L}}$, i.e. 
$\Gamma^{\star} \ll \tilde{\mathcal{L}}$ and $\tilde{\mathcal{L}} \ll \Gamma^{\star}$.
\end{lemma}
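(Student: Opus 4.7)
The plan is to establish the two absolute-continuity relations separately, using the two halves of Lemma~\ref{yopdf}(ii): the uniform upper bound $c^*$ on the one-step density for $\Gamma^\star \ll \tilde{\mathcal{L}}$, and the local lower bound $c_*$ on $\tilde B_\epsilon(\bx)$ (iterated over multiple steps) for $\tilde{\mathcal{L}} \ll \Gamma^\star$.

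First I would handle the easy direction $\Gamma^\star \ll \tilde{\mathcal{L}}$. By stationarity, for any Borel $A \subseteq \bnu_\perp$,
\[
\Gamma^\star(A) \;=\; \int \tilde P(\bx,A)\,\Gamma^\star(d\bx) \;\le\; c^*\,\tilde{\mathcal{L}}(A),
\]
since the density of $\tilde P(\bx,\cdot)$ is globally upper-bounded by $c^*$. Hence $\tilde{\mathcal{L}}(A)=0$ forces $\Gamma^\star(A)=0$.

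For the reverse direction $\tilde{\mathcal{L}} \ll \Gamma^\star$, the key step is to show that the chain is Lebesgue-irreducible from every initial point: for each $\bx \in \bnu_\perp$ and each $A$ with $\tilde{\mathcal{L}}(A)>0$, there exists $n = n(\bx,A)$ with $\tilde P^n(\bx,A) > 0$. I would prove this by induction on $n$ that the $n$-step density $\tilde p^{(n)}(\bx,\cdot)$ is strictly positive $\tilde{\mathcal{L}}$-a.e.\ on the open ball $\tilde B_{n\epsilon}(\bx)$. The base case is Lemma~\ref{yopdf}(ii). For the inductive step, for $\bz \in \tilde B_{(n+1)\epsilon}(\bx)$ the intersection $\tilde B_{n\epsilon}(\bx) \cap \tilde B_\epsilon(\bz)$ has positive $(N{-}1)$-dimensional Lebesgue measure (a one-line geometric check using $\|\bx-\bz\| < (n{+}1)\epsilon$), and on this intersection the Chapman--Kolmogorov integrand $\tilde p^{(n)}(\bx,\by)\,\tilde p(\by,\bz)$ is bounded below by $c_* \cdot \tilde p^{(n)}(\bx,\by) > 0$. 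Given the claim, for any $A$ with $\tilde{\mathcal{L}}(A)>0$, pick a Lebesgue density point $\by_0$ of $A$ and $r>0$ with $\tilde{\mathcal{L}}(A \cap \tilde B_r(\by_0))>0$; choosing $n$ so large that $n\epsilon > \|\bx-\by_0\| + r$ yields $\tilde P^n(\bx,A) \ge \int_{A \cap \tilde B_r(\by_0)} \tilde p^{(n)}(\bx,\bz)\,d\bz > 0$.

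With irreducibility in hand, the conclusion follows by a standard covering argument. Set $F_n := \{\bx : \tilde P^n(\bx,A) > 0\}$. Then $\bigcup_n F_n = \bnu_\perp$, so $\Gamma^\star(F_{n_0}) > 0$ for some $n_0$. If we had $\Gamma^\star(A) = 0$, stationarity would give
\[
0 \;=\; \Gamma^\star(A) \;=\; \int \tilde P^{n_0}(\bx,A)\,\Gamma^\star(d\bx) \;\ge\; \int_{F_{n_0}} \tilde P^{n_0}(\bx,A)\,\Gamma^\star(d\bx) \;>\; 0,
\]
a contradiction; hence $\Gamma^\star(A)>0$. The main obstacle is the Lebesgue-irreducibility step, because the one-step density lower bound is only local; the inductive Minkowski-type extension of positivity to arbitrarily large balls is what carries the argument.
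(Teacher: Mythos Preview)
Your argument is correct and follows essentially the same approach as the paper: the uniform upper bound $c^*$ on the one-step density gives $\Gamma^\star \ll \tilde{\mathcal{L}}$ via stationarity, and iterating the local lower bound $c_*$ (your inductive extension of positivity to $\tilde B_{n\epsilon}(\bx)$) combined with stationarity gives the reverse direction. The paper organizes the second half slightly differently---fixing a compact set $D$ with $\Gamma^\star(D)>0$ first and then showing a uniform-in-$\bx\in D$ lower bound on the probability of reaching a target ball in some fixed number of steps---but the underlying mechanism is identical, and your route of working directly with an arbitrary $A$ of positive Lebesgue measure (rather than reducing to balls) is in fact a bit cleaner.
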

\begin{proof}
$\Gamma^{\star} \ll \tilde{\mathcal{L}}$: This follows from Lemma~\ref{yopdf}.\\
$\tilde{\mathcal{L}} \ll \Gamma^{\star}$: It suffices to show that $\Gamma^{\star}(\tilde{B}_r (\boldsymbol{z}))>0$ for any $\bz \in \bnu_{\perp}$ and $r>0$. Consider the process $\boldsymbol{Y}^{\star}(\cdot)$ with the distribution of $\boldsymbol{Y}^{\star}(0)$ equal to $\Gamma^{\star}$.
(Then the process is of course stationary.) Fix any $0 < \beta < 1$ and choose a compact set ${D}\subset \bnu_{\perp}$ such that  $\Gamma^{\star}\left({D}\right)\geq \beta$. Using  Lemma~\ref{yopdf} we can easily show that there exists time $\tau>0$ and a constant $\Delta>0$,
such that, uniformly in $\bY^{\star}(0)=\bx \in D$,
$$
P\{ \bY^{\star}(\tau) \in \tilde{B}_r (\boldsymbol{z})   ~|~ \bY^{\star}(0)=\bx \} \ge \Delta,
$$
and therefore
$$
\Gamma^{\star}(\tilde{B}_r (\boldsymbol{z})) \geq \beta \Delta > 0.
$$
\end{proof}

\begin{lemma}\label{ne1}
Suppose $\Gamma^{\star}$ is a stationary distribution of $\boldsymbol{Y}^{\star}(\cdot)$. Then
$\tilde{P}_{\bx}(\boldsymbol{Y}^{\star} \to\infty  )  =0,~ \Gamma^{\star}- \mbox{a.s.}$, and hence $\tilde{P}_{\bx}(\boldsymbol{Y}^{\star} (t) \to\infty  )  =0,~  \tilde{\mathcal{L}}- \mbox{a.s.}$.
\end{lemma}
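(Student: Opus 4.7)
The plan is to exploit stationarity and tightness to show that the process, initialized from $\Gamma^\star$, cannot escape to infinity; then transfer this statement to a.e. starting point via Lemma~\ref{yocont}.

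First, I would run the process $\bY^{\star}(\cdot)$ in stationary regime, i.e.\ with $\bY^{\star}(0) \sim \Gamma^{\star}$; call this measure $\mathbb{P}^{\star}$. Since $\Gamma^{\star}$ is a probability measure on the Polish (indeed Euclidean) space $\bnu_{\perp}$, it is tight: for every $\epsilon > 0$ there is a compact $D \subset \bnu_{\perp}$ with $\Gamma^{\star}(D) \ge 1 - \epsilon$. By stationarity,
\begin{equation*}
\mathbb{P}^{\star}\bigl(\bY^{\star}(t) \in D\bigr) = \Gamma^{\star}(D) \ge 1 - \epsilon \qquad \text{for every } t \ge 0.
\end{equation*}

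Now let $E := \{\bY^{\star} \to \infty\}$, which by definition means that $\bY^{\star}(t)$ visits any compact set only finitely often. In particular, on $E$ we have $\mathbbm{1}\{\bY^{\star}(t) \in D\} = 0$ for all sufficiently large $t$, so $\limsup_{t \to \infty}\mathbbm{1}\{\bY^{\star}(t) \in D\} = 0$ on $E$, while this $\limsup$ is at most $1$ on $E^c$. Applying the reverse Fatou lemma (the indicator sequence is bounded by $1$),
\begin{equation*}
\Gamma^{\star}(D) \;=\; \limsup_{t\to\infty} \mathbb{P}^{\star}\bigl(\bY^{\star}(t)\in D\bigr) \;\le\; \mathbb{E}^{\star}\!\left[\limsup_{t\to\infty}\mathbbm{1}\{\bY^{\star}(t)\in D\}\right] \;\le\; \mathbb{P}^{\star}(E^c) \;=\; 1 - \mathbb{P}^{\star}(E).
\end{equation*}
Hence $\mathbb{P}^{\star}(E) \le 1 - \Gamma^{\star}(D) \le \epsilon$. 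Since $\epsilon > 0$ was arbitrary, $\mathbb{P}^{\star}(E) = 0$.

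Finally, conditioning on the initial state gives
\begin{equation*}
0 \;=\; \mathbb{P}^{\star}(E) \;=\; \int \tilde{P}_{\bx}(\bY^{\star} \to \infty)\, d\Gamma^{\star}(\bx),
\end{equation*}
so $\tilde{P}_{\bx}(\bY^{\star} \to \infty) = 0$ for $\Gamma^{\star}$-a.e. $\bx$. By Lemma~\ref{yocont}, $\Gamma^{\star}$ and $\tilde{\mathcal{L}}$ are mutually absolutely continuous, so this equality also holds $\tilde{\mathcal{L}}$-a.e., as required. The main (and only real) subtlety is the interchange of $\limsup$ and expectation, handled by reverse Fatou; everything else is soft, relying on tightness of probability measures on $\bnu_{\perp}$ and the equivalence of $\Gamma^{\star}$ with Lebesgue measure established earlier.
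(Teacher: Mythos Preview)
Your proof is correct and follows essentially the same idea as the paper's: stationarity forces the marginal at every time to equal the tight measure $\Gamma^{\star}$, which is incompatible with a positive probability of escaping every compact. The paper phrases this by contradiction (assuming $\Gamma^{\star}\{\bx:\tilde P_{\bx}(\bY^{\star}\to\infty)\ge\epsilon_1\}\ge\epsilon$ and deriving $\limsup_t P(\bY^{\star}(t)\in D)\le 1-\epsilon\epsilon_1$), whereas you give the direct version via reverse Fatou; you also make the passage to the $\tilde{\mathcal L}$-a.s.\ statement explicit through Lemma~\ref{yocont}, which the paper leaves implicit.
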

\begin{proof}
The proof is by contradiction.
Let $\boldsymbol{Y}^{\star}(0)$ have the stationary distribution $\Gamma^{\star}$, and assume that $\exists \quad \epsilon>0, \epsilon_1 >0$ such that 
\begin{align*}
\Gamma^{\star}(\{	\boldsymbol{x}: \tilde{P}_{\boldsymbol{x}}( \boldsymbol{Y}^{\star}\to\infty   )\geq \epsilon_1					\}) \geq \epsilon.
\end{align*}
This would imply that $\limsup\limits_{t\to\infty} {P}(\boldsymbol{Y}^{\star}(t)\in {D}     ) \leq 1-\epsilon \epsilon_1$ for every compact set ${D}\subset \bnu_{\perp}$. This is impossible, because the distribution of $\bY^{\star}(t)$ is equal to $\Gamma^{\star}$ for all $t$.
\end{proof}

\begin{lemma}\label{ne2}
If  process $\boldsymbol{Y}^{\star}(\cdot)$ has a stationary distribution,
it is non-evanescent.
\end{lemma}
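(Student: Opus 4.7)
The plan is to upgrade the almost-everywhere statement in Lemma~\ref{ne1} to an everywhere statement by exploiting the one-step smoothing property of the transition kernel in Lemma~\ref{yopdf}(ii).

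First I would observe that the event $E:=\{\bY^\star \to \infty\}$ (i.e.\ the sample-path event that $\bY^\star(t)$ eventually leaves every compact subset of $\bnu_\perp$) is invariant under the time shift by one: $\bY^\star \to \infty$ holds if and only if the shifted process $\bY^\star(\cdot+1)$ also tends to infinity. Hence, by the Markov property,
\begin{equation*}
\tilde{P}_{\bx}(E) \;=\; \int_{\bnu_\perp} \tilde{P}_{\by}(E)\, \tilde{P}(\bx,d\by) \qquad \text{for every } \bx \in \bnu_\perp.
\end{equation*}

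Next I would invoke Lemma~\ref{yopdf}(ii): for every $\bx\in \bnu_\perp$ the one-step distribution $\tilde{P}(\bx,\cdot)$ is absolutely continuous with respect to Lebesgue measure $\tilde{\mathcal{L}}$ on $\bnu_\perp$ (its density is bounded above by $c^*$). Consequently $\tilde{P}(\bx,\cdot)$ assigns zero mass to every $\tilde{\mathcal{L}}$-null subset of $\bnu_\perp$.

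By Lemma~\ref{ne1}, the function $\by\mapsto \tilde{P}_{\by}(E)$ vanishes $\tilde{\mathcal{L}}$-a.e., so its integral against $\tilde{P}(\bx,\cdot)$ is zero for every $\bx$. Combining with the Markov identity displayed above yields $\tilde{P}_{\bx}(E)=0$ for every $\bx\in \bnu_\perp$, which is exactly non-evanescence. The main (mild) subtlety is the shift-invariance argument used to rewrite $\tilde{P}_{\bx}(E)$ as an integral against the one-step kernel; once that is in place, the absolute continuity from Lemma~\ref{yopdf}(ii) does the rest.
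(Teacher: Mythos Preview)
Your proposal is correct and follows essentially the same approach as the paper: start from an arbitrary state $\bx$, use that the one-step distribution $\tilde P(\bx,\cdot)$ is absolutely continuous (Lemma~\ref{yopdf}(ii)), and then apply the $\tilde{\mathcal L}$-a.e.\ conclusion of Lemma~\ref{ne1} to the state at time~$1$ to conclude $\tilde P_{\bx}(\bY^\star\to\infty)=0$. Your write-up is slightly more explicit about the shift-invariance/Markov-property identity, but the underlying argument is identical to the paper's.
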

\begin{proof}
Consider process $\boldsymbol{Y}^{\star}(\cdot)$ with fixed initial state\\  $\boldsymbol{Y}^{\star}(0) = \boldsymbol{x}$. Consider one-step transition. The distribution of $\boldsymbol{Y}^{\star}(1)$ is absolutely continuous with respect to $\tilde{\mathcal{L}}$. 
Thus, by Lemma~\ref{ne1}, with probability $1$, $\bz=\boldsymbol{Y}^{\star}(1)$ is such that $\tilde P_{\bz}(\bY^{\star} \to \infty)=0$.
Then, $\tilde P_{\bx}(\bY^{\star} \to \infty)=0$.
\end{proof}

 \begin{lemma}\label{rt12}
 Suppose $\Gamma^{\star}$ is a  stationary distribution of $\boldsymbol{Y}^{\star}(\cdot)$. Then, the Markov chain is positive Harris recurrent, and therefore
$\Gamma^{\star}$ is its unique stationary distribution.
 \end{lemma}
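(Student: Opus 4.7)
The plan is to assemble the lemmas already proved in this section, together with Proposition~\ref{MTtheorem}, into a short logical chain: petiteness of compact sets $+$ non-evanescence $\Rightarrow$ Harris recurrence, and then Harris recurrence together with the finiteness of $\Gamma^{\star}$ immediately gives positive Harris recurrence.

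First, I would invoke Lemma~\ref{yosmall}, which states that every compact subset of the state space $\bnu_{\perp}$ is petite for $\boldsymbol{Y}^{\star}(\cdot)$. This is exactly the structural hypothesis needed to apply Proposition~\ref{MTtheorem}(iii). Next, under the standing assumption that a stationary distribution $\Gamma^{\star}$ exists, Lemma~\ref{ne2} asserts that $\boldsymbol{Y}^{\star}(\cdot)$ is non-evanescent (starting from any state $\bx$). Combining these two facts via Proposition~\ref{MTtheorem}(iii), I conclude that $\boldsymbol{Y}^{\star}(\cdot)$ is Harris recurrent.

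Positive Harris recurrence then follows directly from the definition: the chain is Harris recurrent and possesses a finite (in fact probability) invariant measure, namely $\Gamma^{\star}$ itself. For uniqueness, I would appeal to the standard fact that a Harris recurrent Markov chain admits an invariant measure that is unique up to a multiplicative constant; since $\Gamma^{\star}$ is a probability measure, normalization forces it to be the unique stationary probability distribution. Alternatively, one can argue directly: if $\Gamma_1^{\star}$ and $\Gamma_2^{\star}$ were two distinct stationary probability distributions, then by Lemma~\ref{yocont} both would be equivalent to Lebesgue measure $\tilde{\mathcal{L}}$, and the Harris recurrence together with petiteness of compact sets would force the two chains (started in either distribution) to couple in total variation, yielding $\Gamma_1^{\star}=\Gamma_2^{\star}$, a contradiction.

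I do not anticipate a serious obstacle here, since all of the heavy lifting has been carried out in Lemmas~\ref{yopdf}--\ref{ne2}: the petiteness of compact sets rests on the absolute continuity and uniform lower bound on the transition density, the non-evanescence is a straightforward consequence of stationarity, and Proposition~\ref{MTtheorem} supplies the bridge. The only mildly delicate point is to be clear that existence of \emph{some} probability stationary measure plus Harris recurrence is precisely the definition of positive Harris recurrence, and to cite the correct uniqueness result from~\cite{meyn1992stability} rather than re-derive it.
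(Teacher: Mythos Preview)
Your proposal is correct and follows essentially the same route as the paper: invoke Lemma~\ref{yosmall} (compact sets are petite) and Lemma~\ref{ne2} (non-evanescence) to obtain Harris recurrence via Proposition~\ref{MTtheorem}(iii), and then conclude positive Harris recurrence from the existence of the finite invariant measure $\Gamma^{\star}$. The paper's proof is terser and does not spell out the uniqueness argument separately, but the logical structure is identical.
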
 
\begin{proof}
Since every compact set is petite (Lemma~\ref{yosmall}) and the process is non-evanescent (Lemma~\ref{ne2}), 
it is Harris recurrent by Proposition~\ref{MTtheorem}. But since it has a finite invariant measure $\Gamma^{\star}$, $\boldsymbol{Y}^{\star}(\cdot)$ is positive Harris recurrent.
\end{proof} 

We now show the existence of a stationary distribution of  $\boldsymbol{Y}^{(\star)}(\cdot)$. 

\begin{lemma}
\label{prop3}
 Every weak limit point $\Gamma^{(\star)}$ of the sequence of distributions $\Gamma^{(n)}$ is a stationary distribution of the process \\$\boldsymbol{Y}^{(\star)}(\cdot)$.
  \end{lemma}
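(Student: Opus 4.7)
The plan is to prove that for every bounded continuous $\phi$ on $\bnu_{\perp}$,
$$ \int \phi\, d\Gamma^{(\star)} \;=\; \int \tilde{P}\phi\, d\Gamma^{(\star)}, $$
which characterizes $\Gamma^{(\star)}$ as a stationary distribution of the kernel $\tilde{P}$ of $\bY^{\star}(\cdot)$. Existence of a weak limit along some subsequence $n_k$ follows from the uniform $L^{1}$ bound of Lemma~\ref{lemma3}. Starting $\bQ^{(n_k)}$ from its stationary distribution $\chi^{(n_k)}$ makes $\bY^{(n_k)}(0), \bY^{(n_k)}(1) \sim \Gamma^{(n_k)}$, so the left-hand side equals $\lim_k E[\phi(\bY^{(n_k)}(1))]$. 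The task is to show that this limit also equals $\int \tilde{P}\phi\, d\Gamma^{(\star)}$.

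The one-step evolution
$$\bY^{(n_k)}(1) \;=\; \bY^{(n_k)}(0) + (\bgamma\bA^{(n_k)}(0))_{\perp} - (\bgamma\bS^{(n_k)}(0))_{\perp} + (\bgamma\bU^{(n_k)}(0))_{\perp}$$
differs from \eqn{eq777} in three ways that must be controlled: (i) the arrival density is $f^{(n_k)}$ rather than $f^{\star}$; (ii) the service $\bS^{(n_k)}$ comes from the full MaxWeight rule \eqn{mwdef} on $\bQ^{(n_k)}(0)$, not from the restricted rule \eqn{modmw} on $\bY^{(n_k)}(0)$; and (iii) the wasted-service term $(\bgamma\bU^{(n_k)}(0))_{\perp}$ is present. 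Uniform convergence $f^{(n_k)}\to f^{\star}$ admits a maximal coupling with $\bA^{(n_k)}(0)=\bA^{\star}(0)$ on an event of probability $1-o(1)$. Combining Lemma~\ref{proposition3} with Lemma~\ref{lemma3} yields $\bgamma\bQ^{(n_k)}(0)\cdot\bnu \to +\infty$ in probability while $\bY^{(n_k)}(0)$ remains tight; since there is a strictly positive gap between $\bnu\cdot\blambda^{\star}$ and $\max\{\bnu\cdot\bmu^{l} : \bmu^{l}\notin\bV^{\star}\}$, this forces MaxWeight to select only from $\{l:\bmu^{l}\in\bV^{\star}\}$ and, among those, to maximize $\bY^{(n_k)}(0)\cdot\bmu^{l}$—exactly the rule \eqn{modmw}. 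The same divergence gives $Q^{(n_k)}_{i}(0)\ge S^{\max}_{i}$ for every $i$ with probability $1-o(1)$, eliminating the wasted-service term.

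Let $\tilde{\bY}^{(n_k)}(1)$ be the result of one step of $\tilde{P}$ from $\bY^{(n_k)}(0)$ under this coupling. By Lemma~\ref{yopdf}(i) the non-uniqueness set of \eqn{eq-y-sched} is Lebesgue-null, so by the uniform density bound on $\Gamma^{(n_k)}$ discussed below, the argmax is unique $\Gamma^{(n_k)}$-almost surely. The previous paragraph then gives $\bY^{(n_k)}(1)=\tilde{\bY}^{(n_k)}(1)$ with probability $1-o(1)$, so
$$ E[\phi(\bY^{(n_k)}(1))] \;=\; E[\phi(\tilde{\bY}^{(n_k)}(1))] + o(1) \;=\; \int \tilde{P}\phi\, d\Gamma^{(n_k)} + o(1). $$
Moreover $\tilde{P}\phi$ is continuous off the same Lebesgue-null set (the restricted argmax is locally constant where unique), so the Portmanteau theorem gives $\int \tilde{P}\phi\, d\Gamma^{(n_k)} \to \int \tilde{P}\phi\, d\Gamma^{(\star)}$ provided $\Gamma^{(\star)}$ puts no mass on that null set. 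The main technical obstacle is therefore establishing the absolute continuity $\Gamma^{(\star)}\ll \tilde{\mathcal{L}}$. My plan is to obtain a uniform density bound on the $\Gamma^{(n)}$ themselves: conditionally on $\bQ^{(n)}(0)=\bq$, the distribution of $\bQ^{(n)}(1)$ is supported on a translate of $[\bzero,\bA^{\max}]$ with density at most $\delta^{\star}$ (Lemma~\ref{lem-chi-abs-cont}), so projecting onto $\bnu_{\perp}$ integrates out a $\bnu$-fiber of uniformly bounded length and bounds the marginal density of $\bY^{(n)}(1)$ by a constant $C$ independent of $\bq$ and $n$. Averaging over the stationary law of $\bq$ transfers the bound to $\Gamma^{(n)}$, and the open-set inequality $\Gamma^{(n)}(O)\le C\tilde{\mathcal{L}}(O)$ passes to the weak limit, yielding $\Gamma^{(\star)}\le C\tilde{\mathcal{L}}$ and closing the argument.
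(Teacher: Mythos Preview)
Your argument is correct and follows essentially the same route as the paper's proof: both rely on the uniform density bound for $\Gamma^{(n)}$ (via Lemma~\ref{lem-chi-abs-cont} and the bounded one-step increment) to obtain absolute continuity of the limit, and on Lemma~\ref{proposition3} together with tightness of $\bY^{(n)}$ to force agreement between the MaxWeight decision \eqn{mwdef} and the restricted rule \eqn{modmw}. The only cosmetic difference is that the paper packages the final step as an almost-sure coupling ($\bY^{(n)}(0)\to\bY^{\star}(0)$ and $\bY^{(n)}(1)\to\bY^{\star}(1)$ on a common probability space), whereas you phrase it via Portmanteau applied to the $\tilde{\mathcal{L}}$-a.e.\ continuous function $\tilde{P}\phi$; these are equivalent formulations of the same idea.
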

\begin{proof}
Let $\Gamma^{\star}$ be a weak limit of $\Gamma^{(n)}$ along a subsequence on $n$. We can make the following observations.\\
(a) Observe that uniformly on all (large) $n$ and the distributions of $\bQ^{(n)}(0)$, 
the distribution of $\bY^{(n)}(1)$ is absolutely continuous w.r.t. Lebesgue measure, 
with the upper bounded density. (This easily follows from Lemma~\ref{lem-chi-abs-cont} and the fact 
that $\|\bQ^{(n)}(1)-\bQ^{(n)}(0)\|$ is uniformly bounded.) Then,
we see that $\Gamma^{\star}$ is absolutely continuous with bounded density. \\
(b) Consider any point $\by \in \bnu_{\perp}$ such that the decision $k$ in \eqn{eq-y-sched} is unique and a small $\epsilon>0$ such that
this decision $k$ is also unique for all $\bz \in \tilde B_{\epsilon}(\by)$. (See Lemma~\ref{yopdf}(i).) Then, there exists a sufficiently large $C>0$ such that, uniformly 
in $n$, conditions $\|\bQ^{(n)}(t)\| \ge C$ and $\bY^{(n)}(t) \in  \tilde B_{\epsilon}(\by)$ imply that the same decision $k$ will be unique at time $t$ for the process $\bQ^{(n)}(\cdot)$.

Using these two observations, Lemma~\ref{proposition3}, and
the fact that the distribution of $\bA^{(n)}(t)$ converges to that of $\bA^{\star}(t)$, 
we can choose a further subsequence of $n$ along which the following property holds.
The stationary versions of processes $\boldsymbol{Q}^{(n)}(\cdot)$ and the process
$\boldsymbol{Y}^{\star}(\cdot)$ with distribution of $\boldsymbol{Y}^{\star}(0)$ equal to $\Gamma^{\star}$,
can be constructed on one common probability space, so that with probability $1$:\\
(c) for all large $n$, the same decision $k$ is chosen at time $0$ 
in the processes $\bQ^{(n)}(\cdot)$ and $\bY^\star(\cdot)$;\\
(d) $\boldsymbol{Y}^{(n)}(0) \to  \boldsymbol{Y}^{\star}(0) \mbox{  and  }  \boldsymbol{Y}^{(n)}(1) \to  \boldsymbol{Y}^{\star}(1)$.

This, in turn, implies that for any bounded continuous function $g$ we have,
\begin{align*}
E\left[g(\boldsymbol{Y}^{\star}(0))\right] &= \lim_{n\to\infty} E\left[g(\boldsymbol{Y}^{(n)}(0))\right],\\
 E\left[g(\boldsymbol{Y}^{\star}(1))\right] &= \lim_{n\to\infty} E\left[g(\boldsymbol{Y}^{(n)}(1))\right]. 
\end{align*}
But, $E\left[g(\boldsymbol{Y}^{(n)}(0))\right]  = E\left[g(\boldsymbol{Y}^{(n)}(1))\right] $ for all $n$. Therefore, $E\left[g(\boldsymbol{Y}^{\star}(0))\right]  = E\left[g(\boldsymbol{Y}^{\star}(1))\right] $.
This proves stationarity of $\Gamma^{\star}$.
\end{proof}

 \begin{theorem}
\label{theorem2}
 The Markov process $\boldsymbol{Y}^{\star}(\cdot)$ is positive Harris recurrent. The sequence $\Gamma^{(n)}$ [i.e., the distributions of  $\boldsymbol{Y}^{(n)}(\infty)$]
weakly converges to the unique stationary distribution $\Gamma^{\star}$ of $\boldsymbol{Y}^{\star}(\cdot)$. 
 \end{theorem}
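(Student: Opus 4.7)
The plan is to assemble the theorem from the lemmas already established, the only real work being to upgrade subsequential convergence to convergence of the full sequence.

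\textbf{Step 1 (tightness).} First I would use Lemma~\ref{lemma3}, which gives $\sup_n E\|\boldsymbol{Y}^{(n)}(\infty)\| < \infty$. A one line Markov-inequality argument then yields, for every $\epsilon>0$, an $R=R(\epsilon)$ with $\Gamma^{(n)}(\{\by \in \bnu_\perp : \|\by\| > R\}) \le \epsilon$ uniformly in $n$. The closed ball $\{\|\by\| \le R\}$ in $\bnu_\perp$ is compact, so $\{\Gamma^{(n)}\}$ is tight on $\bnu_\perp$.

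\textbf{Step 2 (existence of a subsequential limit that is stationary).} By Prokhorov's theorem, every subsequence of $\{\Gamma^{(n)}\}$ has a further weakly convergent subsequence; call any such limit $\Gamma^\star$. By Lemma~\ref{prop3}, $\Gamma^\star$ is a stationary distribution of $\boldsymbol{Y}^\star(\cdot)$.

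\textbf{Step 3 (positive Harris recurrence and uniqueness).} With a stationary distribution $\Gamma^\star$ now in hand, Lemma~\ref{rt12} immediately gives that $\boldsymbol{Y}^\star(\cdot)$ is positive Harris recurrent and that $\Gamma^\star$ is its \emph{unique} stationary distribution. In particular, every subsequential weak limit of $\{\Gamma^{(n)}\}$ must coincide with this one $\Gamma^\star$.

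\textbf{Step 4 (convergence of the full sequence).} Finally, I would upgrade subsequential convergence to full convergence by a standard subsequence-of-subsequence argument: if $\Gamma^{(n)}$ did \emph{not} converge weakly to $\Gamma^\star$, then by definition there would exist a bounded continuous $g$, an $\epsilon_0>0$, and a subsequence $n_k$ with $|\int g\,d\Gamma^{(n_k)} - \int g\,d\Gamma^\star| \ge \epsilon_0$; but Step~1 makes $\{\Gamma^{(n_k)}\}$ tight, so by Steps~2--3 a further subsequence weakly converges to the same unique $\Gamma^\star$, a contradiction. Hence $\Gamma^{(n)} \Rightarrow \Gamma^\star$ along the whole sequence.

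The substantive content has already been borne by the earlier lemmas; the only place where I see any genuine subtlety is ensuring that the tightness derived from Lemma~\ref{lemma3} is tightness in $\bnu_\perp$ (which it is, since closed balls in a finite-dimensional subspace are compact). I expect no other obstacle.
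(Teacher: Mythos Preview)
Your proposal is correct and follows essentially the same approach as the paper. The paper's proof is a one-line ``This follows from Lemma~\ref{prop3} and Lemma~\ref{rt12}''; you have simply made explicit the tightness step (via Lemma~\ref{lemma3} and Markov's inequality) and the standard subsequence-of-subsequence argument that are implicitly needed to pass from ``every weak limit point is the unique stationary distribution'' to ``the full sequence converges.''
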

 \begin{proof}
This follows from Lemma~\ref{prop3} and Lemma~\ref{rt12}.
\end{proof}

We are finally in position to give a

\begin{proof}[of Theorem~\ref{th-smooth}]
By Theorem~\ref{theorem2}, the process $\boldsymbol{Y}^{\star}(\cdot)$ is positive Harris recurrent. Moreover,
we know that it is such that every compact set is petite. We can pick any compact set $D$ such that $\Gamma^\star(D)>0$,
and using Nummelin splitting view the process $\boldsymbol{Y}^{\star}(\cdot)$ as having an atom state, with finite average
return time to this atom. We see that the cumulative "service process" $\bG^\star(\cdot)$ corresponding to 
$\boldsymbol{Y}^{\star}(\cdot)$ in steady-state
is such that
\begin{align*}
\max_i \lim_{T\to\infty} P(G^{\star}_{i}(T)=0)=0.
\end{align*}
Finally, the argument used in the proof of Lemma~\ref{prop3} shows that the stationary versions of processes
$\boldsymbol{Y}^{\star}(\cdot)$ and $\boldsymbol{Q}^{(n)}(\cdot)$ for all (large) $n$ can be constructed 
on a common probability space in a way such that, w.p.1, for any $T>0$
$$
\bG^{(n)} (T) \to \bG^\star(T).
$$
This implies \eqn{eq888}.
\end{proof}

\section{Generalization to the case\\ when CRP condition does not\\ necessarily hold}
\label{sec-no-crp}

If CRP condition does not necessarily hold, let
$\bnu$ denote the {\em normal cone} to $\bV$ at point $\blambda^\star$; 
it has dimension $d\ge 1$.
(In the CRP case, $d=1$ and $\bnu$ is a ray.)
Fix any positive vector $\bnu'$ which lies in the relative interior of $\bnu$. Then, $\bV^\star$ is defined more generally as 
$$
\bV^\star = \argmax_{\bx\in \bV} \bnu' \cdot \bx;
$$
it is a $(N-d)$-dimensional face of $V$.
By $\bnu_\perp$ we denote the $(N-d)$-dimensional {\em subspace} orthogonal to $\bnu$.

We will denote by $\bx_\star$ the projection of a vector $\bx$ on the normal cone $\bnu$; that is, $\bx_\star$ is the closest
to $\bx$ point of $\bnu$. Then let $\bx_\perp = \bx - \bx_\star$, and let $\bx_{\perp,sp}$ be the orthogonal projection of
$\bx$ on the {\em subspace} $\bnu_\perp$. Note 
the difference between the definitions of $\bx_\perp$ and $\bx_{\perp,sp}$.
(In the CRP case, $\bx_\perp \equiv \bx_{\perp,sp}$. In the non-CRP case they are
in general different.) We always have 
$\|\bx_{\perp,sp}\| \le \|\bx_\perp\|$. Note that,
{\em if $\bx_\star$ lies in the relative interior of $\bnu$, then
$\bx_\star = \bx_{\perp,sp}$.}

In this notation, the entire development in Sections~\ref{sec-queue-length} 
and \ref{processYn} is carried out essentially as is,
with very minor adjustments.

The development in Section~\ref{sec-Ystar} is carried out with small adjustments,
which are as follows. The queue differential process is defined 
as $\boldsymbol{Y}^{(n)}(t) = (\bgamma \boldsymbol{Q}^{(n)}(t))_{\perp,sp}$.
Correspondingly, the one step evolution of $\bY^{\star}(\cdot)$ is defined by
\eqn{modmw} and
$$
\boldsymbol{Y^{\star}}(t+1) = \boldsymbol{Y}^{\star}(t)
+ \left(\boldsymbol{\gamma} \boldsymbol{A}^{\star}(t) 
- \boldsymbol{\gamma}\boldsymbol{S}(t)\right)_{\perp,sp}.
$$

Therefore, the state space for both $\boldsymbol{Y}^{(n)}(\cdot)$ and $\bY^{\star}(\cdot)$
is $\bnu_\perp$.

The proof of the key Lemma~\ref{prop3} requires, in addition to Lemma~\ref{proposition3},
the following Lemma~\ref{lem55555}.
Let $h(\bx)$ denote the distance from $\bx_\star$ to the relative boundary 
of the cone $\bnu$.
(To be precise, $h(\bx)$ is defined as the distance from $\bx_\star$ to the set \{relative boundary on the cone $\bnu$\}
$\setminus$ \{boundary of the positive orthant $\R^N_+$\}.)

\begin{lemma}
\label{lem55555}
As $n\to\infty$, $h(\bQ^{(n)}(\infty) \to \infty$ in probability.
\end{lemma}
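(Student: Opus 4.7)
The plan is to argue by contradiction, extending the reasoning of Lemma~\ref{proposition3} from the \emph{norm} of the queue to the \emph{alignment} of the queue's normal-cone projection with the relative interior of $\bnu$. Suppose the conclusion fails; then there exist $C,\beta>0$ and a subsequence of $n$ along which $P(h(\bQ^{(n)}(\infty))\le C)\ge\beta$. Because $\bnu$ has only finitely many relative-boundary faces (and only finitely many of these are relevant to $h$), I pass to a further subsequence that fixes a single such face $\bnu_F$, corresponding to a face $F$ of $\bV$ with $\bV^\star\subsetneq F$, for which $P(d((\bQ^{(n)}(\infty))_\star,\mathrm{ri}(\bnu_F))\le C)\ge\beta'>0$. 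Combining with Lemma~\ref{proposition3}, for every $M>0$ there is a ``bad regime'' event of steady-state probability at least $\beta''>0$ on which $\bQ^{(n)}(\infty)$ is simultaneously large (norm $\ge M$) and aligned with $\mathrm{ri}(\bnu_F)$.

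The second step identifies the structural obstruction. In the bad regime, once the queue norm is sufficiently large, the MaxWeight decisions $\bmu^k$ must lie in $F$: any $\bmu^l\notin F$ incurs a deficit of order $\|(\bgamma\bQ^{(n)})_\star\|$ in the MaxWeight objective relative to decisions in $F$, and this deficit dominates the bounded contribution from the perpendicular component $\bY^{(n)}=(\bgamma\bQ^{(n)})_{\perp,sp}$, which is tight in steady state by Lemma~\ref{lemma3}. Hence the mean service rate, conditional on the bad regime, lies in the convex hull of $\{\bmu^k:\bmu^k\in F\}$, hence in $F$. On the other hand, $\blambda^{(n)}\notin F$: for any $\bnu_F'\in\mathrm{ri}(\bnu_F)\subset\bnu$ one has $\bnu_F'\cdot\blambda^\star=\max_{\bx\in\bV}\bnu_F'\cdot\bx$, while $\blambda^{(n)}$ lies strictly inside $\bV$, so $\bnu_F'\cdot\blambda^{(n)}<\bnu_F'\cdot\blambda^\star=\max_{\bx\in F}\bnu_F'\cdot\bx$. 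The gap of order $\epsilon=\|\blambda^\star-\blambda^{(n)}\|$ is the source of the contradiction.

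The final step mirrors the wasted-service construction in Lemma~\ref{proposition3}. Starting from a state in the bad regime, I would exhibit a bounded-length sample-path event of probability bounded below uniformly in $n$ after which some queue component drops low enough that $U^{(n)}_i$ is strictly positive for some flow $i$. This would give $\liminf_n E[U^{(n)}_i(\infty)]>0$ for some $i$, contradicting the maximality of $\blambda^\star$ in $\bV$. The main obstacle is exactly this last step: unlike in Lemma~\ref{proposition3}, the overall queue norm here is \emph{large}, so wasted service must arise from a \emph{component-level} depletion inside a multi-dimensional bad regime. I expect a clean execution will introduce a scalar Foster--Lyapunov function on a directional projection of $\bQ^{(n)}$ adapted to $F$'s geometry (for instance $\bnu_F'\cdot\bQ^{(n)}$), show that it has strictly negative conditional drift throughout the bad regime by the gap identified above, and then use the positive-density lower bound on arrivals (as in Lemma~\ref{prop2}) to realize a specific wasted-service trajectory with positive probability.
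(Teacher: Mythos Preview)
Your contradiction-by-wasted-service plan breaks down at the final step, and the obstacle you flag is real and fatal, not merely technical. In the ``bad regime'' you correctly identify, the mean service lies in $F$, so $\bnu_F'\cdot E[\bS^{(n)}\mid\text{bad}]=\bnu_F'\cdot\blambda^\star$, and the conditional drift of your candidate Lyapunov function $\bnu_F'\cdot\bQ^{(n)}$ is $\bnu_F'\cdot(\blambda^{(n)}-\blambda^\star)+\bnu_F'\cdot E[\bU^{(n)}]$. The negative part is exactly the ``gap of order $\epsilon=\|\blambda^\star-\blambda^{(n)}\|$'' you invoke, and it \emph{vanishes} as $n\to\infty$. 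A drift that tends to zero cannot drive a component of $\bQ^{(n)}$ from order $\|\bQ^{(n)}(\infty)\|\to\infty$ down to $O(1)$ in a number of steps bounded uniformly in $n$, so you cannot manufacture a wasted-service event with probability bounded away from zero uniformly in $n$. The analogy with Lemma~\ref{proposition3} fails precisely because there the starting norm is bounded while here it is not.

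The paper's route avoids wasted service altogether. The contradiction is with stability at the level of \emph{rates}: if $h(\bQ^{(n)}(\infty))$ stays bounded with positive probability, then (using Lemma~\ref{proposition3} and the tightness of $(\bgamma\bQ^{(n)})_\perp$ from Lemma~\ref{lemma3}) the steady-state frequency with which MaxWeight selects some $k$ with $\bmu^k\notin\bV^\star$ does not vanish as $n\to\infty$. But for any fixed $\bnu'\in\mathrm{ri}(\bnu)$, stability forces $\bnu'\cdot E[\bS^{(n)}(\infty)]\ge\bnu'\cdot\blambda^{(n)}\to\bnu'\cdot\blambda^\star$, whereas each $k$ with $\bmu^k\notin\bV^\star$ satisfies $\bnu'\cdot\bmu^k\le\bnu'\cdot\blambda^\star-\delta_0$ for a \emph{fixed} geometric gap $\delta_0>0$ independent of $n$. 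A non-vanishing selection frequency then caps $\bnu'\cdot E[\bS^{(n)}(\infty)]$ strictly below $\bnu'\cdot\blambda^\star$, which is the contradiction. Note the crucial difference from your argument: the relevant gap is $\bnu'\cdot\blambda^\star-\bnu'\cdot\bmu^k$ with $\bnu'$ in the \emph{interior} of $\bnu$ (fixed, positive), not $\bnu_F'\cdot(\blambda^\star-\blambda^{(n)})$ with $\bnu_F'$ on the boundary face (vanishing). Your observation that decisions lie in $F$ is correct but too weak, since $\bV^\star\subset F$; what is needed is that decisions \emph{outside} $\bV^\star$ are chosen with non-vanishing frequency.
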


This lemma is easily proved, because the contrary,
along with Lemmas~\ref{prop3} and Lemma~\ref{lemma3}, 
would imply that the frequency of choosing
scheduling decisions outside $\bV^\star$ would not vanish, as $n\to\infty$;
that would contradict stability when $n$ is large.

Then, in the proof of Lemma~\ref{prop3}, in the statement (b), the condition
$\|\bQ^{(n)}(t)\| \ge C$ is replaced by $h(\bQ^{(n)}(t)) \ge C$;
also, Lemma~\ref{lem55555} is used along with Lemma~\ref{proposition3}.

The statement of Theorem~\ref{theorem2} and the proof of  Theorem~\ref{th-smooth}
remain unchanged.

 \end{document}